\documentclass[11pt]{amsart}
\usepackage{amsmath,amssymb, amsthm,delarray}
\usepackage{graphicx}
\numberwithin{equation}{section}
\newtheorem{thm}{Theorem}[section]
\newtheorem{lemma}[thm]{Lemma}
\newtheorem{remark}[thm]{Remark}
\newtheorem{corollary}[thm]{Corollary}
\newtheorem{proposition}[thm]{Proposition}
{\rm}
\newtheorem{example}{Example}{\rm}
\newtheorem{assumption}{Assumption}{\rm}

\def\beq{\begin{equation} }
\def\eeq{\end{equation} }

\def\C{\mathbb{C}}

\def\N{\mathbb{N}}
\def\M{\mathbf{M}}
\def\R{\mathbb{R}}
\def\A{\mathbf{A}}
\def\P{\mathbf{P}}

\def\K{\mathbf{K}}
\def\B{\mathbf{B}}

\def\M{\mathbf{M}}

\def\G{\mathbf{G}}
\def\H{\mathbf{H}}

\def\v{\mathbf{v}}
\def\f{\mathbf{f}}

\def\u{\mathbf{u}}
\def\f{\mathbf{f}}

\def\b{\mathbf{b}}
\def\a{\mathbf{a}}
\def\c{\mathbf{c}}
\def\x{\mathbf{x}}
\def\g{\mathbf{g}}

\def\y{\mathbf{y}}
\def\z{\mathbf{z}}

\def\dis{\displaystyle}

\def\R{\mathbb{R}}
\def\B{\mathbf{B}}
\def\C{\mathbb{C}}

\def\vol{{\rm vol}\,}
\def\dis{\displaystyle}

\begin{document}
\title{A generalization of L\"owner-John's ellipsoid theorem}
\author{Jean B. Lasserre}
\address{LAAS-CNRS and Institute of Mathematics\\
University of Toulouse,
7 Avenue du Colonel Roche, 310777 Toulouse Cedex 4, France.}
\email{lasserre@laas.fr}

\begin{abstract}
We address the following generalization $\P$ of the L\"owner-John ellipsoid problem.
Given a (non necessarily convex) compact set $\K\subset\R^n$ and an even integer $d\in\N$, 
find an homogeneous
%there is a unique homogeneous 
polynomial $g$ of degree $d$ 
such that $\K\subset \G:=\{\x:g(\x)\leq1\}$ and $\G$ has minimum volume among all such sets.
We show that $\P$ is a convex optimization problem even if neither $\K$ nor $\G$
are convex! We next show that $\P$ has a unique optimal solution 
and a characterization with at most ${n+d-1\choose d}$  contacts points in
$\K\cap\G$ is also provided. This is the analogue for $d>2$ of the L\"owner-John's theorem in
the quadratic case $d=2$, but importantly,  we neither require the set $\K$ nor the sublevel set $\G$ to be convex.
More generally, there is also an homogeneous polynomial $g$ of even degree $d$ and a point $\a\in\R^n$ such that
$\K\subset \G_\a:=\{\x:g(\x-\a)\leq1\}$ and $\G_\a$ has minimum volume among all such sets
(but uniqueness is not guaranteed). Finally, we also outline a numerical scheme to approximate 
as closely as desired the optimal value and an optimal solution. It consists of solving a hierarchy of convex optimization 
problems with strictly convex objective function and Linear Matrix Inequality (LMI) constraints.
\end{abstract}

\keywords{Homogeneous polynomials; sublevel sets; volume; L\"owner-John problem; convex optimization}
\subjclass{26B15 65K10 90C22 90C25}

\maketitle

\section{Introduction}

``Approximating" data by relatively simple geometrical objects is a fundamental problem
with many important applications and 
the ellipsoid of minimum volume is the most  well-known of the associated computational techniques.

In addition to its nice properties from the viewpoint of applications,
the ellipsoid of minimum volume is also very interesting from a mathematical viewpoint.
Indeed, if $\K\subset\R^n$ is a convex body, computing an ellipsoid of minimum volume 
that contains $\K$
is a classical and famous problem which has a unique optimal solution called the {\it L\"owner-John} ellipsoid. In addition, John's theorem states that the optimal ellipsoid $\Omega$ is characterized by
$s$ contacts points $\u_i\in\K\cap\Omega$ (more precisely $\u_i\in\partial\K\cap\partial\Omega$), and positive scalars $\lambda_i$, $i=1,\ldots,s$,
where $s$ is bounded above by $n(n+3)/2$ in the general case and $s\leq n(n+1)/2$ when
$\K$ is symmetric; see e.g. Ball \cite{ball1,ball2}, Henk \cite{henk}.  More precisely,
the  unit ball $B_n$ is the unique ellipsoid with minimum volume containing $\K$ if and only if
$\sum_{i=1}^s\lambda_i\u_i=0$ and $\sum_{i=1}^s\lambda_i\u_i\u_i^T=I_n$,
where $I_n$ is the $n\times n$ identity matrix.

In particular, and in contrast to other approximation techniques,
computing the ellipsoid of minimum volume is a {\it convex} optimization problem for which efficient techniques
are available; see e.g. Calafiore \cite{calafiore} and Sun and Freund \cite{sun} for more details. 
For a nice recent historical survey on the L\"owner-John's ellipsoid, the interested reader is referred to the recent paper by Henk \cite{henk} and the many references therein.

As underlined in Calafiore \cite{calafiore}, {\it ``The problem of approximating observed data
with simple geometric primitives is, since the time of Gauss, of fundamental importance
in many scientific endeavors"}. For practical purposes and numerical efficiency
the most commonly used are polyhedra and ellipsoids and such techniques are ubiquitous in several different area,
control, statistics, computer graphics, computer vision, to mention a few. For instance:

$\bullet$ In {\it robust linear control}, one is interested in outer or inner
approximations of the stability region associated with a linear dynamical system, that is, the set of initial states from which the system can be stabilized by some control policy. Typically, the stability region which can be formulated as a semi-algebraic 
set in the space of coefficients of the characteristic polynomial, is non convex.
By using the Hermite stability criterion, it can be described by a parametrized polynomial matrix inequality
where the parameters account for uncertainties and the variables are the controller coefficients.
Convex inner approximations of the stability region
have been proposed in form of polytopes in Nurges \cite{polytopes},  ellipsoids in Henrion et al. \cite{ellipsoids},
and more general convex sets defined by Linear Matrix Inequalities (LMIs) in Henrion et al. \cite{lmi1}, and Karimi et al. \cite{lmi2}.

$\bullet$ In {\it statistics} one is interested in the ellipsoid $\xi$ of minimum volume covering some given 
$k$ of $m$ data points because $\xi$  has some interesting statistical properties such as 
affine equivariance and positive breakdown properties \cite{croux}. In this context 
the center of the ellipsoid is called the minimum volume ellipsoid (MVE) 
{\it location} estimator and the associated matrix associated with $\xi$ is
called the MVE {\it scatter} estimator; see e.g. Rousseeuw \cite{rousseeuw} and Croux et al. \cite{croux}.

$\bullet$  In {\it pattern separation},
minimum volume ellipsoids are used for separating two sets of data points. For computing
such ellipsoids, convex programming techniques have been used in the early work of Rosen \cite{rosen} and
more modern semidefinite programming techniques in Vandenberghe and Boyd \cite{boyd}. Similarly,
in robust statistics and data mining the ellipsoid of minimum
volume covering a finite set of data points identifies {\it outliers}
as the points on its boundary; see e.g. Rousseeuw and Leroy \cite{rousseeuw}. 
Moreover, this ellipsoid technique is also scale invariant, a highly desirable property in data mining which is
not enjoyed by other clustering methods based on various {\it distances}; see the discussion in 
Calafiore \cite{calafiore}, Sun and Freund \cite{sun} and references therein.

$\bullet$ Other clustering techniques in {\it computer graphics}, {\it computer vision} and {\it pattern recognition}, 
use various (geometric or algebraic) distances (e.g. the equation error) and 
compute the best ellipsoid by minimizing an associated non linear least squares
criterion (whence the name ``least squares fitting ellipsoid" methods). 
For instance, such techniques have been proposed in computer graphics and computer vision by
Bookstein \cite{bookstein} and Pratt \cite{pratt}, in pattern recognition  by Rosin \cite{rosin}, Rosin and West \cite{rosin2},
Taubin \cite{taubin}, and in
another context by Chernousko \cite{chernousko}. When using
an algebraic distance (like e.g. the equation error) the geometric interpretation
is not clear and the resulting ellipsoid may not be satisfactory; see e.g. 
an illuminating discussion in Gander et al. \cite{gander}. 
Moreover, in general the resulting optimization problem is not convex and 
convergence to a global minimizer is not guaranteed. \\

So optimal data fitting using an ellipsoid of minimum volume is not only satisfactory from the viewpoint of applications
but is also satisfactory from a mathematical viewpoint as it reduces to a (often tractable) convex optimization problem
with a unique solution having a nice characterization in term of contact points in $\K\cap\Omega$.
In fact, reduction to solving a convex optimization problem 
with a unique optimal solution, is a highly desirable property of any data fitting technique!

\subsection*{A more general optimal data fitting problem}

In the L\"owner-John problem one restricts to convex bodies $\K$ because
for a non convex set $\K$ the optimal ellipsoid is also solution 
to the problem where $\K$ is replaced with its convex hull ${\rm co}(\K)$. However, if one considers 
sets that are more general than ellipsoids, an optimal solution for $\K$ is not necessarily the same as
for ${\rm co}(\K)$, and indeed, in some applications one is interested in
approximating as closely as desired a non convex set $\K$. In this case
a non convex approximation is sometimes highly desirable as more efficient.

For instance, in the robust control problem already alluded to above,
in Henrion and Lasserre \cite{ieee} we have provided
an inner approximation of the stability region $\K$  by the sublevel set $\G=\{\x: g(\x)\leq0\}$ of
a non convex polynomial $g$. By allowing 
the degree of $g$ to increase one obtains the convergence ${\rm vol}(\G)\to{\rm vol}(\K)$
which is impossible with convex polytopes, ellipsoids and LMI approximations as described in \cite{polytopes,ellipsoids,lmi1,lmi2}.

So if one considers the more general data fitting problem
where $\K$ and/or the (outer) approximating set are allowed to be non convex,
can we still infer interesting conclusions 
as for the L\"owner-John problem? Can we also derive a practical algorithm for computing (or at least approximating)
an optimal solution? 

The purpose of this paper is to provide results in this direction
that can be seen as a non convex generalization of the Lowner-John
problem but,  surprisingly, still reduces to solving a convex optimization problem with a unique optimal solution.

Some works have considered generalizations of the L\"owner-John problem.
Fo instance, Giannopoulos et al. \cite{giannopoulos} have extended John's theorem for couples $(\K_1,\K_2)$ of convex bodies when $\K_1$ is {\it in maximal volume position} of $\K_1$ inside $\K_2$, whereas Bastero and Romance \cite{bastero} refined this result by allowing $\K_1$ to be non-convex.

In this paper 
we consider a different non convex generalization of the 
L\"owner-John ellipsoid problem, with a more algebraic flavor. Namely, we address the following two problems $\P_0$ and $\P$.\\

{\em $\P_0$: Let $\K\subset\R^n$ be a compact set (not necessarily convex) 
and let $d$ be an even integer. 
Find an homogeneous polynomial $g$ of degree $d$ such that its sublevel set
$\G_1:=\{\x\,:\,g(\x)\leq 1\}$ contains $\K$ and has minimum volume among all such sublevel sets
with this inclusion property.}\\

{\em $\P$: Let $\K\subset\R^n$ be a compact set (not necessarily convex)
and let $d$ be an even integer. Find an homogeneous polynomial $g$ of degree $d$ and $\a\in\R^n$ such that the sublevel set
$\G_1^\a:=\{\x\,:\,g(\x-\a)\leq 1\}$ contains $\K$ and has minimum volume among all such sublevel sets
with this inclusion property.}\\

Necessarily $g$ is a nonnegative homogeneous polynomial since otherwise 
the volumes of $\G_1$ and $\G^\a_1$ are not finite. Of course, when $d=2$
then $g$ is convex (i.e., $\G_1$ and $\G^\a_1$ are ellipsoids)
because every nonnegative quadratic form defines a convex function, and $g$ is an optimal solution for problem $\P$ with $\K$ or its convex hull ${\rm co}(\K)$. That is, one retrieves the L\"owner-John problem.
But when $d>2$ then $\G_1$ and $\G_1^\a$ are not necessarily convex. For instance, take
$\K=\{\x:g(\x)\leq 1\}$ where $g$ is some nonnegative homogeneous polynomial such that $\K$ is compact but non convex. Then $g$ is an optimal solution for problem $\P_0$ with $\K$ and cannot be optimal for ${\rm co}(\K)$;  a two-dimensional example is $(x,y)\mapsto g(x,y):=x^4+y^4-\epsilon x^2y^2$
and another one is $g(x,y):=x^6+y^6-\epsilon x^3y^3$, for $\epsilon>0$ sufficiently small;
see Figure \ref{figure-2}.

\begin{center} 
\begin{figure}
\resizebox{0.9\textwidth}{!}
{\includegraphics{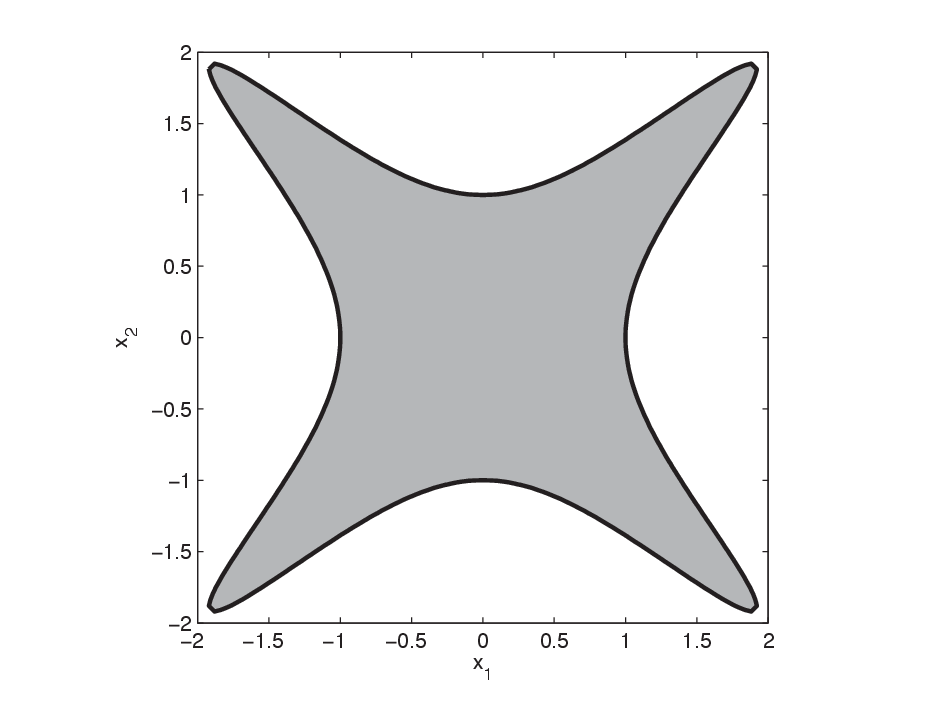}\includegraphics{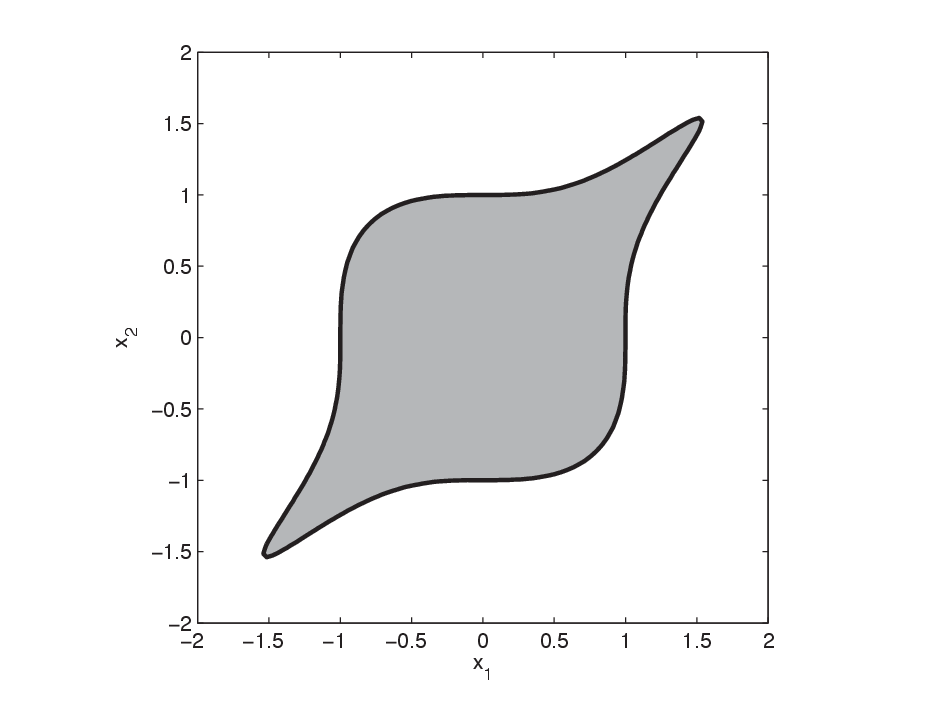}}
\caption{$\G_1$ with $g(\x)=x^4+y^4-1.925\,x^2y^2$ (left) and with $g(\x)=x^6+y^6-1.925\,x^3y^3$ (right)\label{figure-2}}
\end{figure}
\end{center}

\subsection*{Contribution} 
We show that problem $\P_0$ and $\P$ are  indeed natural generalizations of the L\"owner-John ellipsoid problem in the sense that:

- (a) $\P_0$ also has a {\it unique} solution $g^*$.

- (b) A characterization of $g^*$  also involves
$s$ contact points  in $\K\cap\G_1$ (more precisely in $\partial\K\cap\partial\G_1$), where $s$ is now bounded by ${n+d-1\choose d}$ (when $d=2$ one retrieves the bound for the symmetric L\"owner-John problem).

And so when $d=2$ we retrieve the symmetric L\"owner-John problem as a particular case. In fact
it is shown that $\P_0$ is a convex optimization problem
no matter if neither $\K$ nor $\G_1$ are convex. Of course, convexity in itself does not guarantee  a favorable computational complexity\footnote{For instance some well-known NP-hard 0/1 optimization problems reduce to conic LP optimization problems over the
convex cone of copositive matrices (and/or its dual) for which the associated membership problem is hard.}.
As we will see $\P_0$ reduces to minimizing a strictly convex function over a convex cone intersected with an affine subspace and
``hardness" of $\P_0$ is reflected in two of its components: (i) The (convex)  objective function as well as its gradient and Hessian are  difficult to evaluate, 
and (ii) the cone membership problem is NP-hard in general.
However convexity is crucial to show the uniqueness and characterization of the optimal solution in (a) and (b) above.

We use an intermediate and crucial result of independent interest. Namely, 
the Lebesgue-volume function $g\mapsto v(g):={\rm vol}(\G_1)$ is a strictly convex function of the coefficients of $g$, which is far from being obvious from its definition. 
Concerning the more general problem $\P$, we also show that there is an optimal solution
$(g^*,\a^*)$ with again a characterization which involves $s$ contact points in $\K\cap\G^{\a^*}_1$, but now uniqueness is not guaranteed.
Again and importantly, in both problems $\P_0$ and $\P$, neither $\K$ nor $\G_1^\a$ are required to be convex.

\subsection*{On the computational side} Even though $\P_0$ is a convex optimization problem, it is hard to solve because 
even if $\K$ would be a finite set of points (as is the case in statistics applications of the L\"owner-John problem) and
in contrast to the quadratic case,  evaluating the (strictly convex) objective function, its gradient and Hessian 
can be a challenging problem, especially if the dimension is larger than $n=3$. Indeed 
evaluating the objective function reduces to computing the Lebesgue volume of the sublevel set $\G_1$
whereas evaluating its gradient and Hessian requires computing other moments of the Lebesgue measure on $\G_1$.
So this is one price to pay for the generalization of the L\"owner-John ellipsoid problem. (Notice however that if
$\K$ is not a finite set of points then even the L\"owner-John ellipsoid problem
is also hard to solve because for more general sets $\K$ 
the inclusion constraint $\K\subset\xi$ (or ${\rm conv}(\K)\subset\xi$) can be difficult to handle.) 
In general, and even for convex bodies, computing the volume 
is an NP-hard problem; in fact even approximating the volume efficiently  within given bounds is hopeless.
For more details the  interested reader is referred to e.g. Barvinok \cite{barvinok}, Dyer et al. \cite{dyer} and the many references therein. 
On the other hand, even though $\G_1$ is not necessarily convex, it is still a rather specific set and 
assessing a precise computational complexity for its volume remains to be done. 

However, we can still approximate 
as closely as desired the objective function as well as its gradient and Hessian
by using the methodology developed in Henrion et al \cite{sirev}, especially when the dimension 
is small $n=2,3$ (which is the case in several applications in statistics).

Moreover, if $\K$ is a (compact) basic semi-algebraic set with an explicit description
$\{\x: g_j(\x)\geq0,\:j=1,\ldots,m\}$ for some polynomials $(g_j)\subset\R[\x]$, then
we can use powerful positivity certificates from real algebraic geometry
to handle the inclusion constraint $\G_1\supset\K$ in the associated convex optimization problem.
Therefore, in this context, we also provide a numerical scheme to approximate the optimal value and the unique optimal solution of $\P_0$ as closely as desired. It consists of solving a hierarchy of convex optimization problems where each problem
in the hierarchy has a strictly convex objective function and a feasible set defined by Linear Matrix Inequalities (LMIs).

\section{Notation, definitions and preliminary results}

\subsection{Notation and definitions}
Let $\R[\x]$ be the ring of polynomials in the variables $\x=(x_1,\ldots,x_n)$ and let
$\R[\x]_d$ be the vector space of polynomials of degree at most $d$
(whose dimension is $s(d):={n+d\choose n}$).
For every $d\in\N$, let  $\N^n_d:=\{\alpha\in\N^n:\vert\alpha\vert \,(=\sum_i\alpha_i)\leq d\}$, 
and
let $\v_d(\x)=(\x^\alpha)$, $\alpha\in\N^n$, be the vector of monomials of the canonical basis 
$(\x^\alpha)$ of $\R[\x]_{d}$. 
For two real symmetric matrices $\B,\mathbf{C}$, the notation
$\langle \B,\mathbf{C}\rangle$ stands for ${\rm trace}\,(\B\mathbf{C})$; also, the notation $\B\succeq0$ (resp. $\B\succ0$) stands for $\B$ is positive semidefinite (resp. positive definite).
A polynomial $f\in\R[\x]_d$ is written
\[\x\mapsto f(\x)\,=\,\sum_{\alpha\in\N^n}f_\alpha\,\x^\alpha,\]
for some vector of coefficients $\f=(f_\alpha)\in\R^{s(d)}$. A polynomial $f\in\R[\x]_d$ is homogeneous
of degree $d$ if $f(\lambda\x)=\lambda^d f(\x)$ for all $\x\in\R^n$ and all $\lambda\in\R$.

Let us denote by $\H[\x]_d$, $d\in\N$, the space of homogeneous polynomials of degree $d$ and
$\P[\x]_d\subset \H[\x]_d$, its subset of homogeneous polynomials of degree $d$ 
such that their sublevel set $\G_1:=\{\,\x\,:\,g(\x)\leq 1\,\}$ has finite Lebesgue volume, denoted
${\rm vol}(\G_1)$. Notice that $g\in\P[\x]_d$ is necessarily nonnegative
(so that $d$ is necessarily even) and $0\not\in\P[\x]_d$; but $\P[\x]_d$
is {\it not} the set of positive semidefinite (psd) forms of degree $d$ (excluding the zero form); indeed
if $n=2$ and $\x\mapsto g(\x)=(x_1-x_2)^2$ then $\G_1$ does not have finite Lebesgue volume.
On the other hand when $g\in\P[\x]_d$ the set $\G_1$ is not necessarily bounded;
for instance if $n=2$ and $\x\mapsto g(\x):=x_1^2x_2^2(x_1^2+x_2^2)$,
the set $\G_1$ has finite volume but is not bounded\footnote{We thank Pham Tien Son for providing these
two examples.}. So $\P[\x]_d$ is not the space of positive definite (pd) forms of degree $d$ either.

For $d\in\N$ and a closed set $\K\subset\R^n$, denote by $C_d(\K)$ the convex cone of 
all polynomials of degree at most $d$ that are nonnegative on $\K$, and denote by
$\mathcal{M}(\K)$ the Banach space of finite signed Borel measures with support contained in $\K$
(equipped with the total variation norm). Let $\mathcal{M}(\K)_+\subset\mathcal{M}(\K)$ be the convex cone
of finite (positive) Borel measures on $\K$.

In the Euclidean space $\R^n$ we denote by $\langle\cdot,\cdot\rangle$ the usual duality bracket.
 
 \subsection*{Laplace transform}
 Given a measurable function $f:\R\to\R$ with $f(t)=0$ for all $t<0$, its one-sided (or unilateral) {\it Laplace transform}
 $\mathcal{L}[f]:\C\to\C$ is defined by
 \[\lambda\mapsto\quad \mathcal{L}[f](\lambda)\,:=\,\int_0^\infty \exp(-\lambda t)f(t)\,dt,\qquad \lambda\in\,D,\]
where its domain $D\subset\C$ is the set of $\lambda\in\C$ where the above integral is finite. 
For instance, let $f(t)=0$ if $t<0$ and $f(t)=t^a$ for $t\geq0$ and $a>-1$. Then
$\mathcal{L}[f](\lambda)=\frac{\Gamma(a+1)}{\lambda^{a+1}}$ and $D=\{\lambda:\Re(\lambda)>0\}$.
Moreover
$\mathcal{L}[f]$ is analytic on $D$ and therefore if there exists an analytic function $F$ such that
$\mathcal{L}[f](\lambda)=F(\lambda)$ for all $\lambda$ in a segment of the real line contained in $D$,
then $\mathcal{L}[f](\lambda)=F(\lambda)$ for all $\lambda \in D$. This is a consequence of the Identity Theorem for analytic functions;
see e.g. Freitag and Busam \cite[Theorem III.3.2, p. 125]{freitag}. A classical reference for the Laplace transform
is Widder \cite{widder}.

 \subsection{Some preliminary results}
 
 We first  have the following result:
 \begin{lemma}
 \label{prop1}
 The set $\P[\x]_d$ is a convex cone.
 \end{lemma}
 \begin{proof}
 Let $g,h\in\P[\x]_d$ with associated sublevel sets
 $\G_1=\{\x:g(\x)\leq1\}$ and $\H_1=\{\x:h(\x)\leq1\}$. For $\lambda\in (0,1)$, consider the nonnegative homogeneous polynomial $\theta:=\lambda g+(1-\lambda)h\in\R[\x]_d$,
 with associated sublevel set 
 \[\Theta_1\,:=\,\{\x\::\:\theta(\x)\,\leq \,1\}\,=\,\{\x\::\:\lambda g(\x)+(1-\lambda)h(\x)\,\leq\,1\}.\]
 Write $\Theta_1=\Theta_1^1\cup\Theta_1^2$ where 
 $\Theta_1^1=\Theta_1\cap\{\x:g(\x)\geq  h(\x)\}$ and
 $\Theta_1^2=\Theta_1\cap\{\x:g(\x)< h(\x)\}$.
 Observe that $\x\in\Theta_1^1$ implies $h(\x)\leq 1$ and so $\Theta_1^1\subset \H_1$. Similarly
 $\x\in\Theta_1^2$ implies $g(\x)\leq 1$ and so $\Theta_1^2\subset \G_1$. Therefore
 ${\rm vol}(\Theta_1)\leq {\rm vol}(\G_1)+{\rm vol}(\H_1)<\infty$. And so $\theta\in\P[\x]_d$.
 \end{proof}

 With $y\in\R$ and $g\in\R[\x]$ let $\G_y:=\{\x: g(\x)\leq y\}$. 
 The following intermediate result which is crucial and of independent interest  was already proved in
 Morozov and Shakirov \cite{morosov1,morosov2} with different arguments.
   
 \begin{thm}
 \label{lemma1}
 Let $g\in\P[\x]_{d}$. Then for every $y\geq0$:
 \begin{equation}
 \label{lemma1-1}
 {\rm vol}(\G_y)\,=\,\frac{y^{n/d}}{\Gamma(1+n/d)}\,\int_{\R^n}\exp(-g(\x))\,d\x.
 \end{equation}
 \end{thm}
 \begin{proof}
 As $g\in\P[\x]_d$, and using homogeneity, ${\rm vol}(\G_1)<\infty$ implies ${\rm vol}(\G_y)<\infty$ for every $y\geq0$.
 Let $h:\R\to\R$ be the function $y\mapsto h(y):={\rm vol}(\G_y)$.
 Since $g$ is nonnegative, the function $h$ vanishes on $(-\infty,0]$. 
Its Laplace transform $\mathcal{L}[h]: \mathbb{C}\to\mathbb{C}$ is the function
\[\lambda\mapsto \mathcal{L}[h](\lambda):=\int_0^\infty\exp(-\lambda y) h(y)\,dy,\qquad\Re(\lambda)\,>\,0,\]
whose domain is $D=\{\lambda\in \C:\:\Re(\lambda)>0\}$.
Observe that whenever $\lambda\in\R$ with $\lambda>0$,
\begin{eqnarray*}
\mathcal{L}[h](\lambda)&=&\int_0^\infty\exp(-\lambda y)\left(\dis\int_{\{\x: g(\x)\leq y\}}d\x\right)\,dy\\
&=&\dis\int_{\R^n}\left( \int_{g(\x)}^\infty \exp(-\lambda y)dy\right)\,d\x\quad\mbox{[by Fubini's Theorem]}\\
&=&\frac{1}{\lambda}\dis\int_{\R^n}\,\exp(-\lambda g(\x))\,d\x\\
&=&
\frac{1}{\lambda}\dis\int_{\R^n}\,\exp(-g(\lambda^{1/d}\x))\,d\x\quad\mbox{[by homogeneity]}\\
&=&\frac{1}{\lambda^{1+n/d}}\dis\int_{\R^n}\exp(-g(\z))\,d\z\quad\mbox{[by $\lambda^{1/d}\x\to\z$]}\\
&=&\underbrace{\frac{\dis\int_{\R^n}\exp(-g(\z))\,d\z}{\Gamma(1+n/d)}}_{{\rm constant}\, c}\:\frac{\Gamma(1+n/d)}{\lambda^{1+n/d}}.
\end{eqnarray*}
Next, the function $\lambda\mapsto \frac{c\,\Gamma(1+n/d)}{\lambda^{1+n/d}}$
is analytic on $D$ and coincide with $\mathcal{L}[h]$ on the real half-line $\{t: t>0\}$ contained in $D$. Therefore
by the Identity Theorem 
$\mathcal{L}[h](\lambda)=\frac{c\,\Gamma(1+n/d)}{\lambda^{1+n/d}}$ on $D$. Finally observe that
$\frac{\Gamma(1+n/d)}{\lambda^{1+n/d}}$ is the Laplace transform of $t\mapsto u(t)=t^{n/d}$,
which yields the desired result $h(y)={\rm vol}(\G_y)=c\,y^{n/d}$. 
\end{proof}
And we also conclude:
\begin{corollary}
\label{coro1}
 Let $g\in\H[\x]_d$. Then $g\in\P[\x]_{d}$, i.e. ${\rm vol}(\G_1)<\infty$, if and only if 
 $\dis\int_{\R^n}\exp(-g(\x))d\x<\infty$.
\end{corollary}
\begin{proof}
The implication $\Rightarrow$ follows from Theorem \ref{lemma1}. For the reverse implication
consider the function $u:\R^n\times\R_+\to\R$ defined by $(\x,y)\mapsto u(\x,y):=\exp(-y)\,I_{\{\x: g(\x)\leq y\}}$,
which is measurable and nonnegative. Therefore by Tonelli's Theorem (see e.g. Royden \cite{royden}):
\begin{eqnarray*}
\int_{\R^n}\int_{\R_+} u(\x,y)\,d\x\,dy&=&
\int_{\R^n}\left(\underbrace{\int_{\R_+} u(\x,y)\,dy}_{\exp(-g(\x))}\right)\,d\x\,<\,\infty\\
&=&\int_{\R_+}\left(\int_{\R^n} u(\x,y)\,d\x\,\right)\,dy\\
&=&\int_{\R_+}\exp(-y)\,\left(\underbrace{\int_{\R^n} I_{\{\x: g(\x)\leq y\}}\,d\x}_{{\rm vol}\,(\G_y)}\right)\,dy.\end{eqnarray*}
Therefore ${\rm vol}\,(\G_y)$ is finite (and so is $\G_1$).
\end{proof}
As already mentioned, Formula (\ref{lemma1-1}) relating the Lebesgue volume $\G_1$ with the integral $\int\exp(-g)$ is already proved (with a different argument) in Morozov and Shakirov \cite{morosov1,morosov2}
where the authors want to express the non Gaussian integral $\int \exp(-g)$ in terms of algebraic invariants of $g$.

\subsection*{Sensitivity analysis and convexity}

We now investigate some properties of the function $v:\P[\x]_d\to\R$ defined by
\begin{equation}
\label{fonctionvolume}
g\,\mapsto\,v(g)\,:=\,{\rm vol}(\G_1)\,=\,\int_{\{\x\::\:g(\x)\leq1\}}d\x,\qquad g\in\P(\x]_{d},\end{equation}
i.e., we now view ${\rm vol}(\G_1)$ as a function of the 
vector $\g=(g_\alpha)\in\R^{\ell(d)}$ of coefficients of $g$ in the canonical basis of 
homogeneous polynomials of degree $d$ (and $\ell(d)={n+d-1\choose d}$).

\begin{thm}
\label{lemma2}
The {\it Lebesgue-volume} function $v:\P(\x]_d\to\R$ defined in (\ref{fonctionvolume}) is strictly convex
and lower semi-continuous.
In ${\rm int}(\P[\x]_d)$ its gradient $\nabla v$ and Hessian $\nabla^2v$ are given by:

\begin{equation}
\label{aux2-1}
\frac{\partial v(g)}{\partial g_\alpha}\,=\,\frac{-1}{\Gamma(1+n/d)}\int_{\R^n} \x^\alpha\,\exp(-g(\x))\,d\x,
\end{equation}
for all $\alpha\in\N^n_{d}$, $\vert\alpha\vert =d$.
\begin{equation}
\label{aux2-2}
\frac{\partial^2 v(g)}{\partial g_\alpha\partial g_\beta}\,=\,\frac{1}{\Gamma(1+n/d)}\int_{\R^n} \x^{\alpha+\beta}\,\exp(-g(\x))\,d\x,
\end{equation}
for all $\alpha,\beta\,\in\N^n_{d}$, $\vert\alpha\vert=\vert\beta\vert=d$.
Moreover, we also have
\begin{equation}
\label{euler-1}
\int_{\R^n}g(\x)\,\exp(-g(\x))\,d\x\,=\,
\frac{n}{d}\,\int_{\R^n}\exp(-g(\x))\,d\x.
\end{equation}
\end{thm}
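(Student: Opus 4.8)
The plan is to establish all three displayed formulas from the single integral representation of Lemma~\ref{lemma1}, namely $f(g)=\frac{1}{\Gamma(1+n/d)}\int_{\R^n}\exp(-g(\x))\,d\x$, which expresses the volume as a smooth function of the coefficient vector $\g=(g_\alpha)_{|\alpha|=d}$. First I would justify differentiation under the integral sign: for $g$ in the (open) cone $\P[\x]_d$, Corollary~\ref{coro1} guarantees $\int\exp(-g)<\infty$, and one shows that for a small perturbation $g+h$ with $h$ homogeneous of degree $d$ and $\|h\|$ small, the polynomial $g+h$ is still in $\P[\x]_d$ with $\int \x^\gamma\exp(-(g+h)(\x))\,d\x$ locally bounded for every multi-index $\gamma$ with $|\gamma|\le 2d$; this provides the integrable dominating functions needed to differentiate once and twice under the integral. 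Differentiating $\exp(-g(\x))=\exp(-\sum_{|\alpha|=d} g_\alpha \x^\alpha)$ with respect to $g_\alpha$ brings down a factor $-\x^\alpha$, giving \eqref{aux2-1}, and a second differentiation with respect to $g_\beta$ brings down $\x^{\alpha+\beta}$, giving \eqref{aux2-2}.

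Next I would prove strict convexity. The Hessian matrix $H=(H_{\alpha\beta})$ with $H_{\alpha\beta}=\frac{1}{\Gamma(1+n/d)}\int \x^{\alpha+\beta}\exp(-g(\x))\,d\x$ is, for any vector $\c=(c_\alpha)_{|\alpha|=d}\in\R^{\ell(d)}$, seen to satisfy
\begin{equation}
\label{quadform}
\c^\top H\,\c\,=\,\frac{1}{\Gamma(1+n/d)}\int_{\R^n}\Bigl(\sum_{|\alpha|=d}c_\alpha\,\x^\alpha\Bigr)^{\!2}\exp(-g(\x))\,d\x\,\geq\,0,
\end{equation}
so $f$ is convex. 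For strict convexity it suffices to show $\c^\top H\c=0$ forces $\c=0$: if the integral in \eqref{quadform} vanishes then the homogeneous polynomial $p(\x):=\sum_{|\alpha|=d}c_\alpha\x^\alpha$ vanishes on the set where $\exp(-g(\x))>0$, i.e.\ on all of $\R^n$ (the weight is strictly positive everywhere since $g$ is a polynomial, hence finite); a polynomial vanishing on $\R^n$ is the zero polynomial, so all $c_\alpha=0$, and $H\succ0$. Since the Hessian is positive definite at every $g\in\P[\x]_d$ and the domain $\P[\x]_d$ is convex by Proposition~\ref{prop1}, $f$ is strictly convex.

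Finally, for the Euler-type identity \eqref{euler-1}, I would invoke Euler's relation for the homogeneous polynomial $g$ of degree $d$, namely $\sum_{i=1}^n x_i\,\partial g/\partial x_i=d\,g(\x)$. Multiplying $\exp(-g(\x))$ by this and integrating, one writes $\int g(\x)\exp(-g(\x))\,d\x=\frac{1}{d}\int\sum_i x_i\,\frac{\partial g}{\partial x_i}(\x)\exp(-g(\x))\,d\x=-\frac1d\int\sum_i x_i\,\frac{\partial}{\partial x_i}\bigl(\exp(-g(\x))\bigr)\,d\x$, and then integrates by parts in each variable $x_i$. The boundary terms vanish because $\exp(-g(\x))$ together with all polynomial multiples decays fast enough at infinity (a consequence, again via Lemma~\ref{lemma1} and Corollary~\ref{coro1}, of $g\in\P[\x]_d$), and $\partial x_i/\partial x_i=1$ produces $\int\sum_i 1\cdot\exp(-g(\x))\,d\x=n\int\exp(-g(\x))\,d\x$, yielding \eqref{euler-1}. (Alternatively, \eqref{euler-1} follows by applying Euler's identity for homogeneous functions to the map $\g\mapsto f(g)$ itself: from \eqref{aux2-1} one has $\sum_{|\alpha|=d}g_\alpha\,\partial f/\partial g_\alpha=\frac{-1}{\Gamma(1+n/d)}\int g(\x)\exp(-g(\x))\,d\x$, and from Lemma~\ref{lemma1} the function $f$ is positively homogeneous of degree $-n/d$ in $\g$, so $\sum_\alpha g_\alpha\,\partial f/\partial g_\alpha=-\frac nd f(g)$; comparing gives the result.)

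The main obstacle I anticipate is the rigorous justification of differentiation under the integral sign and of the integration-by-parts boundary terms: one must produce, uniformly on a neighborhood of a given $g\in\P[\x]_d$, integrable majorants for $\x^\gamma\exp(-(g+h)(\x))$ for all needed $|\gamma|\le 2d$. This requires a quantitative statement that membership in $\P[\x]_d$ is stable under small perturbations of the coefficients and that the relevant moment integrals vary continuously — essentially a local boundedness argument for the family of measures $\exp(-(g+h)(\x))\,d\x$. Everything else (the pointwise differentiation, the sum-of-squares rewriting of the quadratic form, and Euler's identity) is routine once this analytic groundwork is in place.
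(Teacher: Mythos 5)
Your overall architecture is sound and in places coincides with the paper's: both start from the representation $f(g)=\Gamma(1+n/d)^{-1}\int\exp(-g)\,d\x$ of Lemma \ref{lemma1}, and your parenthetical derivation of (\ref{euler-1}) from the degree-$(-n/d)$ positive homogeneity of $\g\mapsto\int\exp(-g)\,d\x$ plus Euler's identity is exactly the paper's argument. Where you genuinely diverge is on convexity: the paper proves strict convexity directly from the strict convexity of $u\mapsto\exp(-u)$ under the integral sign (equality forces $g=q$ a.e., hence as polynomials), with no differentiation at all, whereas you deduce it from positive definiteness of the Hessian via the sum-of-squares identity $\c^\top\nabla^2f(g)\,\c=\Gamma(1+n/d)^{-1}\int(\sum_\alpha c_\alpha\x^\alpha)^2\exp(-g)\,d\x$. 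Your route is clean where it applies, but it makes the easy statement (convexity) hostage to the hard one (differentiability); the paper's argument is more elementary and valid on all of $\P[\x]_d$. For the derivative formulas themselves the paper also avoids your dominating-function program: it computes one-sided directional derivatives $f'(g;\pm e_\alpha)$ using the monotonicity in $t$ of the difference quotients $(\exp(\mp t\x^\alpha)-1)/t$ (convexity of $t\mapsto\exp(\mp t\x^\alpha)$) together with the Extended Monotone Convergence Theorem, which requires no two-sided majorant.

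The one concrete gap is that your justification rests on the claim that $\P[\x]_d$ is open and that membership is stable under small coefficient perturbations. This is false. For example $g=x^4y^2+y^6=y^2(x^4+y^4)$ belongs to $\P[\x]_6$ in $\R^2$ (one checks $\int\exp(-g)\,d\x=C\int|y|^{-1/2}\exp(-y^6)\,dy<\infty$), yet $g+\delta x^5y$ takes negative values for every $\delta\neq0$ and so leaves the cone; worse, at this $g$ the integrals $\int x^6\exp(-g)\,d\x$ and $\int x^{12}\exp(-g)\,d\x$ appearing in (\ref{aux2-1})--(\ref{aux2-2}) diverge, so no locally uniform integrable majorant can exist and the classical partial derivatives fail to exist there (only infinite one-sided ones do). Hence your program goes through only on the interior of $\P[\x]_d$ --- e.g.\ at positive definite forms, where $g\geq c\Vert\x\Vert^d$ furnishes the majorants immediately --- and to cover the rest of the cone you must either adopt the paper's monotone-convergence device for one-sided derivatives or explicitly restrict the differentiability claims to interior points. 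Your strict-convexity and Euler arguments are otherwise fine (the boundary terms in your integration by parts do vanish, since $x_i\exp(-g)\to0$ along a.e.\ line when $g\in\P[\x]_d$), but they inherit the same caveat wherever they invoke the gradient or Hessian formulas.
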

\begin{proof}
By Theorem \ref{lemma1} $v(g)=c\int_{\R^n}\exp(-g(\x))d\x$ with $c=\Gamma(1+n/d)^{-1}$.
Let $p,q\in\P[\x]_{d}$ and $\alpha\in [0,1]$.
By convexity of $u\mapsto \exp(-u)$,
\begin{eqnarray*}
v(\alpha p+(1-\alpha)q)&\leq&c\,\dis\int_{\R^n} [\,\alpha\exp(-p(\x))+(1-\alpha)\exp(-q(\x))\,]\,d\x\\
&=&\alpha v(p)+(1-\alpha)v(q),
\end{eqnarray*}
and so $v$ is convex. Next, in view of the strict convexity of $u\mapsto \exp(-u)$, 
equality may occur only if $p(\x)=q(\x)$ almost everywhere, which implies $p=q$ and which in turn implies 
strict convexity of $v$. To get the lower-semicontinuity, let
$(g_n)\subset\P[\x]_d$ be  such that $g_n\to g$ as $n\to\infty$ and
$\liminf_{h\to g}v(h)=\lim_{n\to\infty}v(g_n)$. Then $g_n(\x)\to g(\x)$ pointwise and
by Fatou lemma (since $v\geq0$)
\begin{eqnarray*}
\liminf_{h\to g}v(h)\,=\,\liminf_{n\to\infty}\int_{\R^n}\exp(-g_n)\,d\x&\geq&\int_{\R^n}\liminf_{n\to\infty}\exp(-g_n)\,d\x\\
&=&\int_{\R^n}\exp(-g)\,d\x\,=\,v(g).\end{eqnarray*}

To obtain (\ref{aux2-1})-(\ref{aux2-2}) when $g\in {\rm int}(\P[\x]_d)$, one takes partial derivatives under the integral sign, which
in this context is allowed. Indeed,
write $g$ in the canonical basis as
 $g(\x)=\sum_{\vert\alpha\vert=d} g_\alpha\x^\alpha$. For every
$\alpha\in\N^n_d$ with $\vert\alpha\vert=d$, let $(e_\alpha)\subset\R^{\ell(d)}$ be the standard unit vectors
of $\R^{\ell(d)}$. Then for every $t>0$ sufficiently small, $\x\mapsto g(\x)+t\x^\alpha\in \P[\x]_d$ and
\[\frac{v(g+te_\alpha)-v(g)}{t}\,=\,c\,\int_{\R^n} \exp(-g)\,\left(\underbrace{\frac{\exp(-t\x^\alpha)-1}{t}}_{\psi(t,\x)}\right)\,d\x <\infty\]
Notice that for every $\x$, by convexity of the function $t\mapsto \exp(-t\x^\alpha)$, 
\[\lim_{t\downarrow0}\psi(t,\x)\,=\,\inf_{t>0}\psi(t,\x)\,=\,\exp(-t\x^\alpha)'_{\vert t=0}\,=\,-\x^\alpha,\]
because for every $\x$, the function $t\mapsto \psi(t,\x)$ is nondecreasing; see e.g. Rockafellar \cite[Theorem 23.1]{rockafellar}.
Hence, the one-sided directional derivative $v'(g;e_\alpha)$ in the direction $e_\alpha$ satisfies
\begin{eqnarray*}
v'(g;e_\alpha)&=&\lim_{t\downarrow 0}\frac{v(g+te_\alpha)-v(g)}{t}\,=\,
\lim_{t\downarrow 0}\,c\,\int_{\R^n}\exp(-g)\,\psi(t,\x)\,d\x\\
&=&c\,\int_{\R^n} \exp(-g)\,\lim_{t\downarrow 0}\psi(t,\x)\,d\mu(\x)\,=\,c\,\int_{\R^n}-\x^\alpha \exp(-g)\,d\x,
\end{eqnarray*}
where the third equality follows from the Extended Monotone Convergence Theorem \cite[1.6.7]{ash}.
Indeed for all $t<t_0$ with $t_0$ sufficiently small, the function $\psi(t,\cdot)$ is bounded above
by $\psi(t_0,\cdot)$ and $\int_{\R^n} \exp(-g)\psi(t_0,\x)d\mu<\infty$. Similarly, for every $t>0$
\[\frac{v(g-te_\alpha)-v(g)}{t}\,=\,
c\,\int_{\R^n}\exp(-g)\,\underbrace{\frac{\exp(t\x^\alpha)-1}{t}}_{\xi(t,\x)}\,d\x,\]
and by convexity of the function $t\mapsto \exp(t\x^\alpha)$
\[\lim_{t\downarrow0}\xi(t,\x)\,=\,\inf_{t >0}\xi(t,\x)\,=\,\exp(t\x^\alpha)'_{\vert t=0}\,=\,\x^\alpha.\]
Therefore, with exactly same arguments as before,
\begin{eqnarray*}
v'(g;-e_\alpha)&=&\lim_{t\downarrow 0}\frac{v(g-te_\alpha)-v(g)}{t}\\
&=&c\,\int_{\R^n}\x^\alpha\exp(-g)\,d\x=-v'(g;e_\alpha),\end{eqnarray*}
and so 
\[\frac{\partial v(g)}{\partial g_\alpha}\,=\,-c\,\int_{\R^n}\x^\alpha\,\exp(-g)\,d\x,\]
for every $\alpha$ with $\vert\alpha\vert= d$, which yields (\ref{aux2-1}). 
Similar arguments can used for the Hessian $\nabla^2v(g)$ which yields (\ref{aux2-2}).

To obtain (\ref{euler-1}) observe that $g\mapsto H(g):=\int \exp(-g)d\x$, $g\in\P[\x]_d$, is a positively
homogeneous function of degree $-n/d$, continuously differentiable. And so combining (\ref{aux2-1})
with Euler's identity $\langle \nabla H(g),g\rangle =-nH(g)/d$, yields:
\begin{eqnarray*}
-\frac{n}{d}\int_{\R^n}\exp(-g(\x))\,d\x&=&-\frac{n}{d}\,H(g)\\
&=&\langle \nabla H(g),g\rangle\quad\mbox{[by Euler's identity]}\\
&=&-\int_{\R^n}g(\x)\,\exp(-g(\x))\,d\x.\end{eqnarray*}
\end{proof}

Notice that convexity of $v$ is not obvious at all from its definition (\ref{fonctionvolume}) whereas it becomes almost transparent when using formula (\ref{lemma1-1}).
\subsection{The dual cone of $C_d(\K)$}
\label{dualck}

For a convex cone $C\subset\R^n$, the convex cone
\[C^*\,:=\,\{\,\y\::\: \langle \y,\x\rangle\,\geq\,0\quad\forall\,\x\,\in\,C\,\},\]
is called the {\it dual} cone of $C$, and if $C$ is closed then $(C^*)^*=C$.
Recall that for a set $\K\subset\R^n$, $C_d(\K)$ denotes the convex cone of polynomials of degree at most $d$ which are nonnegative on $\K$. We say that a vector $\y\in\R^{s(d)}$ has a representing measure (or is a $d$-truncated moment 
sequence) if there exists a finite Borel measure $\phi$ on $\R^n$ such that
\[y_\alpha\,=\,\int_{\R^n}\x^\alpha\,d\phi,\qquad \forall\alpha\in\N^n_d.\]

We will need the following (already known) characterization the dual cone $C_d(\K)^*$ (which is also
transparent in \cite[\S 1.1, p. 852]{nie-helton}).

\begin{lemma}
\label{dual-ck}
Let $\K\subset\R^n$ be compact. For every $d\in\N$, the dual cone $C_d(\K)^*$  is the convex cone
\begin{equation}\label{dual-ck-1}
\Delta_d\,:=\,\left\{\left(\int_\K \x^\alpha\,d\phi\right), \:\alpha\in \N^n_d\::\:\phi\in \mathcal{M}(\K)_+\:\right\},
\end{equation}
i.e., the convex cone of vectors of $\R^{s(d)}$ which have a representing measure with support contained in $\K$.
\end{lemma}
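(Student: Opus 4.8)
The plan is to prove the two inclusions $\Delta_d \subseteq C_d(\K)^*$ and $C_d(\K)^* \subseteq \Delta_d$ separately, with the second (the nontrivial one) going through a compactness/closedness argument on $\Delta_d$ combined with the bipolar theorem, and a Hahn--Banach separation step.

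\textbf{The easy inclusion.} First I would check $\Delta_d \subseteq C_d(\K)^*$. Take $\y = \left(\int_\K \x^\alpha\,d\phi\right)_{\alpha\in\N^n_d}$ for some $\phi\in M(\K)$, and let $p = \sum_\alpha p_\alpha \x^\alpha \in C_d(\K)$. Then $\langle \y, \p\rangle = \sum_\alpha p_\alpha \int_\K \x^\alpha\,d\phi = \int_\K p(\x)\,d\phi \geq 0$ since $p\geq0$ on $\K$ and $\phi$ is a nonnegative measure. Hence $\y \in C_d(\K)^*$.

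\textbf{The reverse inclusion.} For $C_d(\K)^* \subseteq \Delta_d$ I would argue by contradiction: suppose $\y^* \in C_d(\K)^*$ but $\y^* \notin \Delta_d$. The first key step is to show $\Delta_d$ is a \emph{closed} convex cone in $\R^{s(d)}$. Convexity is immediate (sum of measures, scaling). For closedness, since $\K$ is compact I can normalize: any $\phi \in M(\K)$ with $\phi \neq 0$ can be written $\phi = \|\phi\|\,\tilde\phi$ with $\tilde\phi$ a probability measure on $\K$, and the set of probability measures on the compact set $\K$ is weak-$*$ compact (Banach--Alaoglu / Prokhorov). So if $\y^{(k)} = \int_\K \x^\alpha\,d\phi_k \to \y$, either the masses $\|\phi_k\|$ are bounded --- then a subsequence of $\phi_k$ converges weak-$*$ to some $\phi\in M(\K)$ and $\y = \int_\K \x^\alpha\,d\phi \in\Delta_d$ (using that each $\x^\alpha$ is continuous and bounded on the compact $\K$) --- or $\|\phi_k\|\to\infty$, in which case I normalize to extract a limit probability measure $\mu$ with $\int_\K \x^\alpha\,d\mu = 0$ for all $\alpha\in\N^n_d$; taking $\alpha = 0$ forces $\mu(\K)=0$, contradicting that $\mu$ is a probability measure (assuming $\K\neq\emptyset$; the empty case is trivial). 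Hence $\Delta_d$ is closed.

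\textbf{Separation.} Now, since $\Delta_d$ is a nonempty closed convex cone and $\y^*\notin\Delta_d$, by the separating hyperplane theorem there is a vector $\p = (p_\alpha)\in\R^{s(d)}$ with $\langle \p, \y\rangle \geq 0$ for all $\y\in\Delta_d$ and $\langle \p, \y^*\rangle < 0$. The first inequality, applied to $\y = \int_\K \x^\alpha\,d\delta_{\x_0}$ for each point $\x_0\in\K$ (Dirac mass, which lies in $M(\K)$), gives $p(\x_0) = \langle \p, (\x_0^\alpha)\rangle \geq 0$ for all $\x_0\in\K$, i.e. the polynomial $p\in C_d(\K)$. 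But then $\langle \p, \y^*\rangle \geq 0$ because $\y^*\in C_d(\K)^*$, contradicting $\langle \p, \y^*\rangle < 0$. Therefore $\y^*\in\Delta_d$, completing the proof. Equivalently, one can phrase the whole reverse inclusion as: $C_d(\K)^* = C_d(\K)^* $, and since Dirac masses show $\{(\x_0^\alpha) : \x_0\in\K\} \subseteq \Delta_d$ and $C_d(\K) = \{p : \langle \p,(\x_0^\alpha)\rangle\geq0\ \forall \x_0\in\K\} = \Delta_d^*$, we get $\Delta_d^{**} = C_d(\K)^*$, and $\Delta_d^{**} = \overline{\Delta_d} = \Delta_d$ by the bipolar theorem and closedness.

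\textbf{Main obstacle.} The one step that genuinely requires care is the closedness of $\Delta_d$: the cone of moment sequences of \emph{finite} measures is not automatically closed in general, and here the argument leans essentially on compactness of $\K$ (to get weak-$*$ compactness of normalized measures and boundedness of the monomials) together with the presence of the constant coordinate $\alpha=0$ to rule out escape of mass to infinity. Everything else is routine once this is in hand.
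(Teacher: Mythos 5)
Your proof is correct and follows essentially the same route as the paper's: the easy inclusion by direct integration, closedness of $\Delta_d$ via weak-$*$ compactness of measures on the compact set $\K$ together with the $\alpha=0$ coordinate controlling the total masses, and then the identification $C_d(\K)=\Delta_d^*$ combined with the bipolar theorem (which you phrase equivalently as a Hahn--Banach separation against Dirac masses). The only cosmetic difference is that your case ``$\Vert\phi_k\Vert\to\infty$'' is vacuous, since $\phi_k(\K)=y_0^{(k)}$ converges and is therefore automatically bounded, as the paper notes directly.
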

\begin{proof}
For every $\y=(y_\alpha)\in\Delta_d$ and $f\in C_d(\K)$ with coefficient vector $\f\in\R^{s(d)}$:
\begin{equation}
\label{dual-100}
\langle\y,\f\rangle\,=\,\sum_{\alpha\in\N^n_d}f_\alpha\, y_\alpha
\,=\,\sum_{\alpha\in\N^n_d}\int_\K f_\alpha\, \x^\alpha\,d\phi
\,=\,\int_\K f\,d\phi\,\geq\,0.\end{equation}
Since (\ref{dual-100}) holds for all $f\in C_d(\K)$ and all $\y\in\Delta_d$, then necessarily $\Delta_d\subseteq C_d(\K)^*$ and similarly,
$C_d(\K)\subseteq\Delta_d^*$. Next,
\begin{eqnarray*}
\Delta_d^*&=&\left\{\f\in\R^{s(d)}\::\:\langle\f,\y\rangle\geq0\quad\forall \y\in\Delta_d\right\}\\
&=&\left\{f\in\R[\x]_d\::\:\int_\K f\,d\phi\geq0\quad\forall \phi\in \mathcal{M}(\K)_+\right\}\\
&\Rightarrow&\Delta_d^*\subseteq C_{d}(\K),
\end{eqnarray*}
and so $\Delta_d^*=C_d(\K)$. Hence the result follows if one proves that $\Delta_d$ is closed, because then 
$C_d(\K)^*=(\Delta_d^*)^*=\Delta_d$, the desired result. So let $(\y^k)\subset\Delta_d$, $k\in\N$, with $\y^k\to\y$ as $k\to\infty$.
Equivalently, $\int_\K\x^\alpha d\phi_k\to y_\alpha$ for all $\alpha\in\N^n_d$.
In particular, the convergence $y^k_0\to y_0$ implies that the sequence of measures $(\phi_k)$, $k\in\N$, is bounded,
that is, $\sup_k\phi_k(\K)<M$ for some $M>0$. As $\K$ is compact, the unit ball of $\mathcal{M}(\K)$ is sequentially compact in the weak $\star$ topology $\sigma(\mathcal{M}(\K),C(\K))$ where $C(\K)$ is the space of continuous functions on $\K$. Hence there is a finite Borel measure 
$\phi\in \mathcal{M}(\K)_+$ and a subsequence $(k_i)$ such that $\int_\K gd\phi_{k_i}\to\int_\K gd\phi$ as $i\to\infty$, for all $g\in C(\K)$.
In particular, for every $\alpha\in\N^n_d$,
\[y_\alpha\,=\,\lim_{k\to\infty}y^{k}_\alpha\,=\,\lim_{i\to\infty}y^{k_i}_\alpha
\,=\,\lim_{i\to\infty}\int_\K\x^\alpha d\phi_{k_i}\,=\,\int_\K \x^\alpha d\phi,\]
which shows that $\y\in\Delta_d$, and so $\Delta_d$ is closed.
\end{proof}
And we also have:
\begin{lemma}
\label{empty}
Let $\K\subset\R^n$ be with nonempty interior. Then the interior of $C_d(\K)^*$ is nonempty.
\end{lemma}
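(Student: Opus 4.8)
The plan is to show that the convex cone $C_d(\K)^*=\Delta_d$ (using Lemma \ref{dual-ck}) spans $\R^{s(d)}$, since a convex cone with nonempty linear span equal to the whole ambient space and containing a suitable collection of vectors must have nonempty interior. Equivalently, by duality, it suffices to show that $C_d(\K)$ contains no nonzero linear subspace, i.e. that $C_d(\K)\cap(-C_d(\K))=\{0\}$: if $f$ and $-f$ are both nonnegative on $\K$ then $f\equiv0$ on $\K$, and since $\K$ has nonempty interior this forces $f=0$ as a polynomial. Thus $C_d(\K)$ is a \emph{pointed} convex cone, and a standard duality fact says the dual of a pointed closed convex cone with full-dimensional... more precisely, the dual cone $C_d(\K)^*$ has nonempty interior if and only if $C_d(\K)$ is pointed (contains no line). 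I would invoke this: for a closed convex cone $C\subset\R^N$, $\mathrm{int}(C^*)\neq\emptyset \iff C$ is pointed.

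Concretely, I would argue as follows. First, recall from Lemma \ref{dual-ck} that $C_d(\K)^*=\Delta_d$ is closed, so $(C_d(\K)^*)^*=C_d(\K)$. Next, suppose for contradiction that $\Delta_d$ has empty interior; then $\Delta_d$ lies in some hyperplane $\{\y:\langle\f,\y\rangle=0\}$ through the origin, with $\f\neq0$. Since $\Delta_d$ is a cone lying in this hyperplane, both $\langle\f,\cdot\rangle\geq0$ and $\langle-\f,\cdot\rangle\geq0$ hold on $\Delta_d$, i.e. $\f\in\Delta_d^*=C_d(\K)$ and $-\f\in C_d(\K)$. Interpreting $\f$ as the coefficient vector of a polynomial $f\in\R[\x]_d$, this says $f\geq0$ on $\K$ and $-f\geq0$ on $\K$, hence $f\equiv0$ on $\K$. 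Because $\K$ has nonempty interior, a polynomial vanishing on $\K$ vanishes on an open set, hence is identically zero, so $\f=0$, a contradiction. Therefore $\mathrm{int}(C_d(\K)^*)\neq\emptyset$.

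An alternative, more hands-on route avoiding the abstract pointedness lemma: exhibit explicitly a vector in the interior of $\Delta_d$. Take $\phi$ to be Lebesgue measure restricted to a small ball $B\subset\mathrm{int}(\K)$ and set $\y^*_\alpha:=\int_B\x^\alpha\,d\x$. For any nonzero $f\in C_d(\K)$ one has $\langle\f,\y^*\rangle=\int_B f\,d\x>0$ (strictly, since $f$ is nonnegative and not identically zero on the open ball $B$); this shows $\y^*$ lies strictly inside every supporting halfspace coming from $C_d(\K)$, which is precisely the statement that $\y^*\in\mathrm{int}(\Delta_d)=\mathrm{int}(C_d(\K)^*)$ (using that $\Delta_d$ is closed and full-dimensional once one knows no supporting functional vanishes on it). Either way, the construction uses only that $\mathrm{int}(\K)\neq\emptyset$.

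The main obstacle is the last implication: strict positivity of the pairing against all nonzero $f\in C_d(\K)$ is what guarantees \emph{interior}, not merely a point of $\Delta_d$, and one must be careful that $C_d(\K)$ is a closed cone spanning $\R[\x]_d$ (equivalently that $\Delta_d$ is pointed, which is immediate here since $\Delta_d$ consists of moment vectors of nonnegative measures so $\y\in\Delta_d,\ -\y\in\Delta_d$ forces $y_0=0$ hence $\phi=0$). Modulo citing the standard convex-cone duality lemma (pointed $\iff$ dual has nonempty interior), the argument is short; the only genuinely used hypothesis is $\mathrm{int}(\K)\neq\emptyset$, which enters through ``a polynomial vanishing on $\K$ is the zero polynomial.''
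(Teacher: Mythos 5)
Your proof is correct and follows essentially the same route as the paper: the paper reduces the claim to pointedness of $C_d(\K)$ via the standard equivalence $\mathrm{int}(C^*)\neq\emptyset \iff C\cap(-C)=\{0\}$ for a closed convex cone (citing Faraut--Kor\'anyi, Prop.~I.1.4), and then observes that $g\geq 0$ and $g\leq 0$ on $\K$ force $g=0$ because $\K$ has nonempty interior. The only differences are cosmetic --- you rederive the duality equivalence by hand (empty interior $\Rightarrow$ contained in a hyperplane through the origin) and also sketch a constructive variant using the moment vector of Lebesgue measure on a small ball in $\mathrm{int}(\K)$, whereas the paper simply cites the reference.
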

\begin{proof}
Since $C_d(\K)$ is nonempty and closed, by Faraut and Kor\'any \cite[Prop. I.1.4, p. 3]{faraut}
\[{\rm int}(C_d(\K)^*)\,=\,\{\,\y\::\: \langle \y,\g\rangle >0,\quad\forall g\in C_d(\K)\setminus\{0\}\,\},\]
where $\g\in\R^{s(d)}$ is the coefficient of $g\in C_d(\K)$, and 
\[{\rm int}(C_d(\K)^*)\,\neq\,\emptyset\Longleftrightarrow C_d(\K)\cap (-C_d(\K))\,=\,\{0\}.\]
But $g\in C_d(\K)\cap (-C_d(\K))$ implies $g\geq0$ and $g\leq0$ on $\K$, which in turn implies $g=0$ 
because $\K$ has nonempty interior.
\end{proof}
For simplicity and with a slight abuse of notation, we will sometimes write 
$\langle\z,g\rangle$ in lieu of $\langle\z,\g\rangle$ and 
$\langle\z,1-g\rangle$ in lieu of $\langle\z,e_0-\g\rangle$ (where $e_0$ is the unit vector
corresponding to the constant polynomial equal to 1).

\section{Main result}

Consider the following problem $\P_0$, a non convex generalization of the L\"owner-John ellipsoid problem:\\

{\em $\P_0$: Let $\K\subset\R^n$ be a compact set not necessarily convex and $d$ an even integer. Find an homogeneous polynomial $g$ of degree $d$ such that its sublevel set
$\G_1:=\{\x\,:\,g(\x)\leq 1\}$ contains $\K$ and has minimum volume among all such sublevel sets
with this inclusion property.}

In the above problem $\P_0$, the set $\G_1$ is symmetric and so when $\K$ is a symmetric convex body and $d=2$, one retrieves the L\"owner-John ellipsoid problem in the symmetric case. In the next section we will consider the more general case where $\G_1$ is of the form $\G_1^\a:=\{\x:g(\x-\a)\leq 1\}$ for some $\a\in\R^n$ and some $g\in\P[\x]_d$.\\

Recall that $\P[\x]_{d}\subset\R[\x]_{d}$
is the convex cone of nonnegative homogeneous polynomials of degree $d$ whose
sublevel set $\G_1=\{\x:g(\x)\leq 1\}$ has finite volume. Recall also that $C_{d}(\K)\subset\R[\x]_{d}$ is the convex cone
of polynomials of degree at most $d$ that are nonnegative on $\K$.

We next show that solving $\P_0$ is equivalent to solving the convex optimization problem:

\begin{equation}
\label{minvolume}
\mathcal{P}:\qquad \rho=\dis\inf_{g\in\H[\x]_{d}}\:\left\{\,\int_{\R^n} \exp(-g)\,d\x\::\: 1-g\,\in\,C_{d}(\K)\,\right\}.
\end{equation}

\begin{proposition}
\label{min-vol-lemma}
Problem $\P_0$ has an optimal solution if and only if problem $\mathcal{P}$ in (\ref{minvolume}) has an optimal solution.
Moreover, $\mathcal{P}$ is a finite-dimensional convex optimization problem.
\end{proposition}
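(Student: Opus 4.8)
The plan is to show that problems $\P_0$ and $\mathcal{P}$ are essentially the same problem written in two different ways, using Lemma \ref{lemma1} to translate the objective. First I would observe that a homogeneous polynomial $g$ of degree $d$ is feasible for $\P_0$ (i.e. $\K\subset\G_1$ and $\G_1$ has finite volume) if and only if $g\in\P[\x]_d$ and $1-g$ is nonnegative on $\K$, i.e. $1-g\in C_d(\K)$. Indeed, $\K\subset\G_1=\{\x:g(\x)\le1\}$ means exactly $g(\x)\le1$ for all $\x\in\K$, i.e. $(1-g)(\x)\ge0$ on $\K$; and $\G_1$ having finite volume is precisely the defining property of $\P[\x]_d$ (note that for $\P_0$ to even make sense we need the infimum taken over $g$ with $\G_1$ bounded, so nonnegativity of $g$ is automatic). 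One subtlety: $1-g$ has degree $d$ while $C_d(\K)$ consists of polynomials of degree \emph{at most} $d$, so $1-g\in C_d(\K)$ is the correct membership statement; this is why the constraint in \eqref{minvolume} is stated that way.

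Next I would match the objective values. By Lemma \ref{lemma1} with $y=1$,
\[
{\rm vol}(\G_1)\,=\,\frac{1}{\Gamma(1+n/d)}\int_{\R^n}\exp(-g(\x))\,d\x,
\]
so for every feasible $g$ the volume ${\rm vol}(\G_1)$ and the quantity $\int_{\R^n}\exp(-g)\,d\x$ differ only by the fixed positive constant $\Gamma(1+n/d)^{-1}$. Hence the feasible sets of $\P_0$ and $\mathcal{P}$ coincide (both equal $\{g\in\P[\x]_d: 1-g\in C_d(\K)\}$) and the objective functions are positive scalar multiples of one another. Therefore $g^*$ is optimal for $\P_0$ if and only if it is optimal for $\mathcal{P}$, and in particular one has an optimal solution iff the other does, with $\rho={\rm vol}(\G_1^*)\cdot\Gamma(1+n/d)$. (Here I am reading ``$\P$'' in the proposition statement as $\P_0$, consistent with the paragraph introducing \eqref{minvolume}.)

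Finally I would argue that $\mathcal{P}$ is a finite-dimensional convex optimization problem. The decision variable is $g\in\P[\x]_d$, which is parametrized by its coefficient vector $\g=(g_\alpha)_{|\alpha|=d}\in\R^{\ell(d)}$ with $\ell(d)={n+d-1\choose d}$; thus the ambient space is finite-dimensional. The feasible set is the intersection of the convex cone $\P[\x]_d$ (convex by Proposition \ref{prop1}) with the affine-shifted convex cone $\{g: 1-g\in C_d(\K)\}$ (convex since $C_d(\K)$ is a convex cone), hence convex. The objective $g\mapsto\int_{\R^n}\exp(-g)\,d\x$ is convex — in fact strictly convex — by Theorem \ref{lemma2} (it equals $\Gamma(1+n/d)\,f(g)$). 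This establishes all the claims. The main point requiring care is the bookkeeping between ``homogeneous of degree $d$'' and ``degree at most $d$'' in the constraint, and making explicit that finiteness of ${\rm vol}(\G_1)$ is exactly the condition $g\in\P[\x]_d$ invoked via Corollary \ref{coro1}; beyond that the argument is a direct translation with no genuine obstacle.
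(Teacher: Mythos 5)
Your proposal is correct and follows essentially the same route as the paper: translate the volume objective via Lemma \ref{lemma1}, identify the inclusion $\K\subset\G_1$ with the constraint $1-g\in C_d(\K)$, and invoke the convexity of the cones $\P[\x]_d$ and $C_d(\K)$ together with the (strict) convexity of $g\mapsto\int\exp(-g)\,d\x$ from Theorem \ref{lemma2}. The extra bookkeeping you supply (finiteness of ${\rm vol}(\G_1)$ as the defining property of $\P[\x]_d$, and reading ``$\P$'' as $\P_0$) only makes explicit what the paper leaves implicit.
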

\begin{proof}
By Theorem \ref{lemma1}, 
\[\vol(\G_1)\,=\,\frac{1}{\Gamma(1+n/d)}\int_{\R^n}\exp(-g)\,d\x\]
whenever $\G_1$ has finite Lebesgue volume.
Moreover $\G_1$ contains $\K$ if and only if
$1-g\in C_{d}(\K)$ and so $\P_0$ has an optimal solution 
$g^*\in\P[\x]_{d}$ if and only if $g^*$ is an optimal solution of 
$\mathcal{P}$ (with value ${\rm vol}(\G^*_1)\Gamma(1+n/d)$).
Now since $g\mapsto \int_{\R^n}\exp(-g)d\x$ is strictly convex (by Lemma \ref{lemma2})
and both $C_{d}(\K)$ and $\P[\x]_d$ are convex cones, problem $\mathcal{P}$ is a finite-dimensional convex optimization problem.
\end{proof}
We now can state the first main result of this paper:
Recall that $\mathcal{M}(\K)_+$ is the convex cone of finite Borel measures on $\K$.

\begin{thm}
\label{vol-suff-cond}
Let $\K\subset\R^n$ be compact with nonempty interior and consider the convex optimization problem $\mathcal{P}$ in (\ref{minvolume}).

{\rm (a)} $\mathcal{P}$ has a unique optimal solution $g^*\in\P[\x]_{d}$.

{\rm (b)} Let $g^*\in\P[\x]_{d}$ be the unique optimal solution of $\mathcal{P}$ and let
$\G^*_1=\{\,\x:g^*(\x)\leq 1\,\}$. If $g^*\in{\rm int}(\P[\x]_d)$ then there exists a finite Borel measure $\mu^*\in \mathcal{M}(\K)_+$ 
such that
\begin{eqnarray}
\label{kkt-suff}
\int_{\R^n}\x^\alpha\exp(-g^*)d\x&=&\int_{\K}\x^\alpha\,d\mu^*,\qquad\forall\vert\alpha\vert=d\\
\label{kkt-suff1}
\int_\K(1-g^*)\,d\mu^*&=&0;\quad \mu^*(\K)=\frac{n}{d}\int_{\R^n}\exp(-g^*)\,d\x.
\end{eqnarray}
In particular, $\mu^*$ is supported on the set $V:=\{\x\in\K: g^*(\x)=1\}\,(=\K\cap\G^*_1)$ and in fact,
$\mu^*$ can be substituted with another 
measure $\nu^*\in \mathcal{M}(\K)_+$ supported on at most ${n+d-1\choose d}$ contact points of $V$.

{\rm (c)} Conversely, if $g^*\in\R[\x]_{d}$ is homogeneous with $1-g^*\in C_d(\K)$, and 
there exist points $(\x_i,\lambda_i)\in \K\times\R$, $\lambda_i>0$, $i=1,\ldots,s$, such that $g^*(\x_i)=1$ for all $i=1,\ldots,s$, and 
\[\int_{\R^n}\x^\alpha\exp(-g^*)\,d\x\,=\,\sum_{i=1}^s\lambda_i\,\x_i^\alpha,\qquad \vert\alpha\vert=d,\]
then $g^*$ is the unique optimal solution of problem $\mathcal{P}$.
\end{thm}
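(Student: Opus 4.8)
The plan is to prove Theorem~\ref{vol-suff-cond} by a standard convex-optimization / Lagrangian-duality route, exploiting two features established earlier: the strict convexity of $g\mapsto\int_{\R^n}\exp(-g)\,d\x$ (Theorem~\ref{lemma2}) and the explicit description of $C_d(\K)^*$ as the cone of truncated moment sequences with representing measure on $\K$ (Lemma~\ref{dual-ck}). I would organize the argument in the order (existence/uniqueness) $\to$ (KKT necessary conditions) $\to$ (support and Tchakaloff reduction) $\to$ (sufficiency), matching parts (a), (b), (c).

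\emph{Existence and uniqueness (a).} First I would argue $\mathcal{P}$ is feasible: since $\K$ is compact it is contained in some Euclidean ball $\{\x:\|\x\|^2\le R\}$, so $g_0(\x):=R^{-d}\|\x\|_2^{d}$ is homogeneous of degree $d$, lies in $\P[\x]_d$, and satisfies $1-g_0\in C_d(\K)$. Hence the feasible set $F:=\{g\in\P[\x]_d: 1-g\in C_d(\K)\}$ is a nonempty convex subset of the finite-dimensional space of degree-$d$ forms. For existence of a minimizer I would show $F$ is closed and that the objective is coercive on $F$, or more simply restrict to a sublevel set $\{g\in F: \int\exp(-g)\le \int\exp(-g_0)\}$ and show it is compact: membership in $C_d(\K)$ forces $g\le1$ on $\K$, which (since $\K$ has nonempty interior, so contains a small ball $B$) gives a uniform bound $\int_B$-type lower bound forcing the coefficients of $g$ to stay bounded — a form bounded by $1$ on a set of positive measure has bounded coefficients; combined with lower semicontinuity of $\int\exp(-g)$ (Fatou) this yields a minimizer. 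Uniqueness is immediate from strict convexity of the objective on the convex set $F$: two distinct minimizers would give a strictly smaller value at their midpoint.

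\emph{KKT conditions (b).} Here I would set up the Lagrangian/first-order optimality for $\mathcal{P}$. The constraint $1-g\in C_d(\K)$ is a conic constraint; its (pre)dual multiplier lives in $C_d(\K)^*$, which by Lemma~\ref{dual-ck} is exactly $\Delta_d$, i.e.\ realized by a measure $\mu^*\in M(\K)$. To get existence of the multiplier I need a constraint qualification (Slater): I would check that there is a strictly feasible point, e.g.\ a small rescaling of $g_0$ so that $1-g$ lies in the interior of $C_d(\K)^{**}$-sense — more carefully, use that $\K$ having nonempty interior makes $\mathrm{int}\,C_d(\K)^*\ne\emptyset$ (Lemma~\ref{empty}) and/or that $g_0$ can be scaled down so $1-g_0>0$ on $\K$, giving $1-g_0$ in the interior of $C_d(\K)$. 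Then stationarity of the Lagrangian $g\mapsto \int\exp(-g)\,d\x + \langle \mu^*, g\rangle$ (with the homogeneity constraint $|\alpha|=d$ built in, so only degree-$d$ coefficients vary) together with the gradient formula \eqref{aux2-1} gives precisely $\int_{\R^n}\x^\alpha\exp(-g^*)\,d\x=\int_\K\x^\alpha\,d\mu^*$ for $|\alpha|=d$, which is \eqref{kkt-suff}. Complementary slackness gives $\int_\K(1-g^*)\,d\mu^*=0$, and since $1-g^*\ge0$ on $\K$ and $\mu^*\ge0$, this forces $\mu^*$ to be supported on $V=\{\x\in\K:g^*(\x)=1\}=\K\cap\G_1^*$. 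Testing \eqref{kkt-suff} against $g^*$ itself (i.e.\ summing $g^*_\alpha\cdot$ the identity over $|\alpha|=d$) gives $\int g^*\exp(-g^*)\,d\x=\int_\K g^*\,d\mu^*=\mu^*(\K)$ (using the support property $g^*=1$ $\mu^*$-a.e.), and combining with Euler's identity \eqref{euler-1} yields $\mu^*(\K)=\tfrac{n}{d}\int\exp(-g^*)\,d\x$, the second part of \eqref{kkt-suff1}. For the contact-point reduction I would invoke Tchakaloff's theorem (a finite atomic measure matching finitely many moments): the $\ell(d)={n+d-1\choose d}$ linear functionals $\phi\mapsto\int \x^\alpha\,d\phi$, $|\alpha|=d$, applied to $\mu^*$ (supported on $V$) are matched by an atomic measure $\nu^*$ on at most $\ell(d)$ points of $V$; this $\nu^*$ preserves \eqref{kkt-suff} and, since its atoms lie in $V$ where $1-g^*=0$, also \eqref{kkt-suff1}.

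\emph{Sufficiency (c).} This direction is the clean one: given a homogeneous $g^*$ with $1-g^*\in C_d(\K)$ and points $(\x(i),\lambda_i)$ with $g^*(\x(i))=1$, $\lambda_i>0$, realizing $\int\x^\alpha\exp(-g^*)\,d\x=\sum_i\lambda_i\x(i)^\alpha$ for all $|\alpha|=d$, I would verify $g^*\in F$ (it is in $\P[\x]_d$ by Corollary~\ref{coro1}, since $\int\exp(-g^*)$ is finite — actually this needs a word: finiteness follows because $g^*\ge0$ and $g^*\ge1$ outside $\G_1^*$ which contains $\K$... more robustly, the hypothesis implicitly assumes $g^*\in\P[\x]_d$, or one notes the moment identity with finite RHS forces integrability). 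Then for any competitor $g\in F$, convexity of the objective gives $\int\exp(-g)\,d\x\ge \int\exp(-g^*)\,d\x+\langle\nabla f, g-g^*\rangle\cdot\Gamma(1+n/d)$; writing the gradient via \eqref{aux2-1} and the atomic representation, $\langle\nabla f(g^*),g-g^*\rangle = -\sum_i\lambda_i(g(\x(i))-g^*(\x(i))) = -\sum_i\lambda_i(g(\x(i))-1)\ge0$ since $1-g\in C_d(\K)$ gives $g(\x(i))\le1$ and $\lambda_i>0$. Hence $g^*$ is a minimizer, and uniqueness again comes from strict convexity. I expect the main obstacle to be part (b): rigorously justifying the constraint qualification and the existence/representation of the Lagrange multiplier in this infinite-dimensional conic setting (the cone $C_d(\K)$ in $\R[\x]_d$, its dual realized by measures), and threading the homogeneity restriction $|\alpha|=d$ correctly through the stationarity condition; the existence-of-minimizer coercivity argument in (a) is the second delicate point, and everything in (c) is routine once (b) is in place.
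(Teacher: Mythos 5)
Your proposal is correct and follows essentially the same route as the paper: existence and uniqueness via compactness of a sublevel set of the feasible region plus strict convexity of $g\mapsto\int\exp(-g)$, KKT conditions under Slater's condition with the multiplier realized as a measure $\mu^*\in M(\K)$ via Lemma~\ref{dual-ck}, Euler's identity for $\mu^*(\K)$, a Tchakaloff-type atomic reduction for the contact points, and sufficiency of the KKT system for part (c). The only (harmless) deviations are cosmetic: you bound the coefficients in (a) by norm equivalence on forms bounded on a set of positive measure where the paper invokes a compactness result of Faraut--Kor\'anyi, and in (c) you spell out the gradient inequality directly rather than citing sufficiency of KKT under Slater; your remark that part (c) implicitly needs $g^*\in\P[\x]_d$ is a fair observation the paper glosses over.
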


The proof is postponed to \S \ref{proofs}. 
Importantly, notice that neither $\K$ nor $\G^*_1$ are required to be convex.
If the optimal solution $g^*\not\in{\rm int}(\P[\x]_d)$ then $\mu^*$ satisfies an analogue of (\ref{kkt-suff}) 
which now involves a subgradient $\partial v(g^*)$ at $g^*$ of the function $g\mapsto v(g)=\int \exp(-g)d\x$. That is,
$(\int_\K\x^\alpha d\mu^*)_{\vert\alpha\vert=d}\in-\partial v(g^*)$.

\subsection{On the contact points}
Theorem \ref{vol-suff-cond} states that $\mathcal{P}$ (hence $\P_0$) has a unique optimal solution $g^*\in\P[\x]_d$ and 
if $g^*\in{\rm int}(\P[\x]_d)$ one may find contact points $\x_i\in\K\cap\G^*_1$, $i=1,\ldots,s$, with $s\leq 
{n+d-1\choose d}$, such that
\begin{equation}
\label{aux-100}
y^*_\alpha\,=\,\int_{\R^n}\x^\alpha\,\exp(-g^*(\x))\,d\x\,=\,\sum_{i=1}^s\lambda_i\,\x_i^\alpha,\quad\vert\alpha\vert=d,
\end{equation}
for some positive weights $\lambda_i$. In particular, using the identity
(\ref{euler-1}) and $\langle 1-g^*,\y^*\rangle=0$, as well as $g^*(\x_i)=1$ for all $i$,
\[y^*_0\,=\,\sum_{\vert\alpha\vert=d}y^*_\alpha\,g^*_\alpha\,=\,
\frac{n}{d}\int_{\R^n} \exp(-g^*(\x))\,d\x\,=\,\sum_{i=1}^s \lambda_i.\]
Next, recall that $d$ is even and let $\v_{d/2}:\R^n\to\R^{\ell(d/2)}$ be the mapping
\[\x\mapsto \v_{d/2}(\x)=(\x^\alpha),\qquad\vert\alpha\vert\,=\,d/2,\]
i.e., the ${n-1+d/2\choose d/2}$-vector of the canonical basis of $\H[\x]_{d/2}$. From (\ref{aux-100}),
\begin{equation}
\label{second}
\int_{\R^n}\v_{d/2}(\x)\,\v_{d/2}(\x)^T\exp(-g^*)\,d\x\,=\,\sum_{i=1}^s \lambda_i\,\v_{d/2}(\x_i)\,\v_{d/2}(\x_i)^T.\end{equation}
Hence, when $d=2$ and $\K$ is symmetric, one retrieves the characterization in John's theorem
\cite[Theorem 2.1]{henk},
namely that if the euclidean ball $\xi_n:=\{\x:\Vert\x\Vert\leq 1\}$ is the unique ellipsoid of
minimum volume containing $\K$ then there are contact points $(\x_i)\subset\xi_n\cap\K$
and positive weights $(\lambda_i)$, such that $\sum_i\lambda_i\x_i\x_i^T=I_n$ (where $I_n$ is the 
$n\times n$ identity matrix). Indeed in this case, $\v_{d/2}(\x)=\x$, $g^*(\x)=\Vert\x\Vert^2$ and 
$\int_{\R^n}\x\x^T\exp(-\Vert\x\Vert^2)d\x=c\,I_n$ for some constant $c$.

So (\ref{second}) is the analogue for $d>2$ of the contact-points property in John's theorem and we obtain the following generalization: For $d$ even, let $\Vert\x\Vert_d:=(\sum_{i=1}^nx_i^d)^{1/d}$ denote the 
$d$-norm with unit ball $\xi^d_n:=\{\x:\Vert \x\Vert_d\leq 1\}$.
\begin{corollary}
If in Theorem \ref{vol-suff-cond} the unique optimal solution $\G^*_1$ is the $d$-unit ball $\xi^d_n$ then there are contact points $(\x_i)\subset\K\cap\xi^d_n$ and positive weights $\lambda_i$, $i=1,\ldots,s$, with $s\leq {n+d-1\choose d}$, such that for every $\vert\alpha\vert=d$,
\[\sum_{i=1}^s\lambda_i\,\v_{d/2}(\x_i)\v_{d/2}(\x_i)^T\,=\,\int_{\R^n}\v_{d/2}(\x)\v_{d/2}(\x)^T\exp(-\Vert\x\Vert^d_d)\,d\x.\]
\end{corollary}
Equivalently, for $\vert\alpha\vert=d$,
\[\sum_{i=1}^s\lambda_i\,\x_i^\alpha\,=\,\left\{\begin{array}{ll}
\dis\prod_{j=1}^n\dis\int_\R t^{\alpha_j}\exp(-t^d)\,dt&\mbox{if $\alpha=2\beta$}\\
0&\mbox{otherwise.}\end{array}\right.\]

\section*{Example}
With $n=2$ let $\K\subset\R^2$ be the box $[-1,1]^2$ and let $d=4,6$, that is, one searches for 
the unique homogeneous polynomial $g\in\R[\x]_4$ or $g\in\R[\x]_6$ which contains $\K$ and has minimum volume among such sets. 
\begin{thm}
\label{th4}
The sublevel set $\G^4_1=\{\,\x:g_4(\x)\leq1\,\}$ associated with the homogeneous polynomial
\begin{equation}
\label{th4-1}
\x\mapsto g_4(\x)\,=\,x_1^4+x_2^4-x_1^2x_2^2,\end{equation}
is the unique solution of problem $\P_0$ with $d=4$. That is, $\K\subset\G_1^4$ and $\G^4_1$ has minimum volume among all sets
$\G_1\supset\K$ defined with homogeneous polynomials of degree $4$.

Similarly, the sublevel set $\G^6_1=\{\,\x:g_6(\x)\leq1\,\}$ associated with the homogeneous polynomial
\begin{equation}
\label{th4-2}
\x\mapsto g_6(\x)\,=\,x_1^6+x_2^6-(x_1^4x_2^2+x_1^2x_2^4)/2
\end{equation}
is the unique solution of problem $\P_0$ with $d=6$.
\end{thm}
\begin{proof}
Let $g_4$ be as in (\ref{th4-1}) (hence in ${\rm int}(\P[\x]_d)$). We first prove that  $\K\subset\G_1^4$, i.e., 
$1-g_4(\x)\geq0$ whenever $\x\in\K$. But observe that if $\x\in\K$ then
\begin{eqnarray*}
1-g_4(\x)&=&1-x_1^4-x_2^4+x_1^2x_2^2\,=\,1-x_1^4+x_2^2(x_1^2-x_2^2)\\
&\geq&1-x_1^4+x_2^2(x_1^2-1)\quad\mbox{[as $-x_2^2\geq-1$ and $x_2^2\geq0$]}\\
&\geq&(1-x_1^2)\,(1+x_1^2-x_2^2)\\
&\geq&(1-x_1^2)\,x_1^2\,\geq\,0\quad\mbox{[as $-x_2^2\geq-1$ and $1-x_1^2\geq0$]}.
\end{eqnarray*}
Hence $1-g_4\in C_d(\K)$. Observe that $\K\cap\G_1^4$ consists of the $8$ contact points
$(\pm 1,\pm1)$ and $(0,\pm 1)$, $(\pm 1,0)$.
Next let $\nu^*$ be the measure 
defined by 
\begin{equation}
\label{nu*}
\nu^*\,=\,a\,(\delta_{(-1,1)}+\delta_{(1,1)})+b\,(\delta_{(1,0)}+\delta_{(0,1)})
\end{equation}
where $\delta_\x$ denote the Dirac measure at $\x$ and $a,b\geq0$ are chosen to satisfy
\[2\,a\,+\,b\,=\,\int_{\R^n}x_1^4\,\exp(-g_4)\,d\x;\quad 2\,a\,=\,\int_{\R^n}x_1^2x_2^2\,\exp(-g_4)\,d\x,\]
so that 
\[\int\x^\alpha\,d\nu^*\,=\,\int_{\R^n}\x^\alpha\,\exp(-g_4(\x))\,d\x,\quad\vert\alpha\vert=4.\]
Of course a unique solution $(a,b)\geq0$ exists since
\begin{eqnarray*}
\int_{\R^n}x_1^4\,\exp(-g_4)\,d\x\int_{\R^n}x_2^4\,\exp(-g_4)\,d\x&=&\left(\int_{\R^n}x_1^4\,\exp(-g_4)\,d\x\right)^2\\
&\geq&\left(\int_{\R^n}x_1^2x_2^2\,\exp(-g_4)\,d\x\right)^2.\end{eqnarray*}
Therefore the measure $\nu^*$ is indeed as in Theorem \ref{vol-suff-cond}(c) and the proof is completed.
Notice that  as predicted by Theorem \ref{vol-suff-cond}(b), $\nu^*$ is supported on
$4\leq {n+d-1\choose d}=5$  points.
Similarly with $g_6$ as in (\ref{th4-2}) and $\x\in\K$,
\begin{eqnarray*}
1-g_6(\x)&=&1-x_1^6-x_2^6+(x_1^4x_2^2+x_1^2x_2^4)/2\\
&=&(1-x_1^6)/2+(1-x_2^6)/2-x_1^4(x_1^2-x_2^2)/2-x_2^4(x_2^2-x_1^2)/2\\
&\geq&(1-x_1^6)/2+(1-x_2^6)/2-x_1^4(1-x_2^2)/2-x_2^4(1-x_1^2)/2\\
&&\mbox{[as $-x_1^6\geq-x_1^4$ and $-x_2^6\geq-x_2^4$]}\\
&\geq&(1-x_1^2)(1+x_1^2+x_1^4-x_2^4)/2+(1-x_2^2)(1+x_2^2+x_2^4-x_1^4)/2\\
&\geq&(1-x_1^2)(x_1^2+x_1^4)/2+(1-x_2^2)(x_2^2+x_2^4)/2\,\geq0\\
&&\mbox{[as $1-x_1^4\geq0$ and $1-x_2^4\geq0$]}.
\end{eqnarray*}
So again the measure $\nu^*$ defined in (\ref{nu*}) where
$a,b\geq0$ are chosen to satisfy
\[2\,a\,+\,b\,=\,\int_{\R^n}x_1^6\,\exp(-g_6)\,d\x;\quad 2\,a\,=\,\int_{\R^n}x_1^4x_2^2\,\exp(-g_6)\,d\x,\]
is such that
\[\int\x^\alpha\,d\mu^*\,=\,\int_{\R^n}\x^\alpha\,\exp(-g_6(\x))\,d\x,\quad\vert\alpha\vert=6.\]
Again a unique solution $(a,b)\geq0$ exists because
\[\left(\int_{\R^n}x_1^6\,\exp(-g_6)\,d\x\right)\left(\int_{\R^n}x_1^2x_2^4\,\exp(-g_6)\,d\x\right)
\,\geq\,\left(\int_{\R^n}x_1^4x_2^2\,\exp(-g_6)\,d\x\right)^2.\]
\end{proof}
With $d=4$, the non convex sublevel set $\G_1^4=\{\x:g_4(\x)\leq1\}$ which is displayed in Figure \ref{figure-degree4} (left) is a much better 
approximation of $\K=[-1,1]^2$ than the ellipsoid of minimum volume $\xi=\{\x:\Vert\x\Vert^2\leq 2\}$ that contains 
$\K$. In particular, ${\rm vol}(\xi)=2\pi\approx 6.28$ whereas ${\rm vol}(\G_1^4)\approx 4.32$. 
\begin{figure}
\resizebox{1.1\textwidth}{!}
{\includegraphics{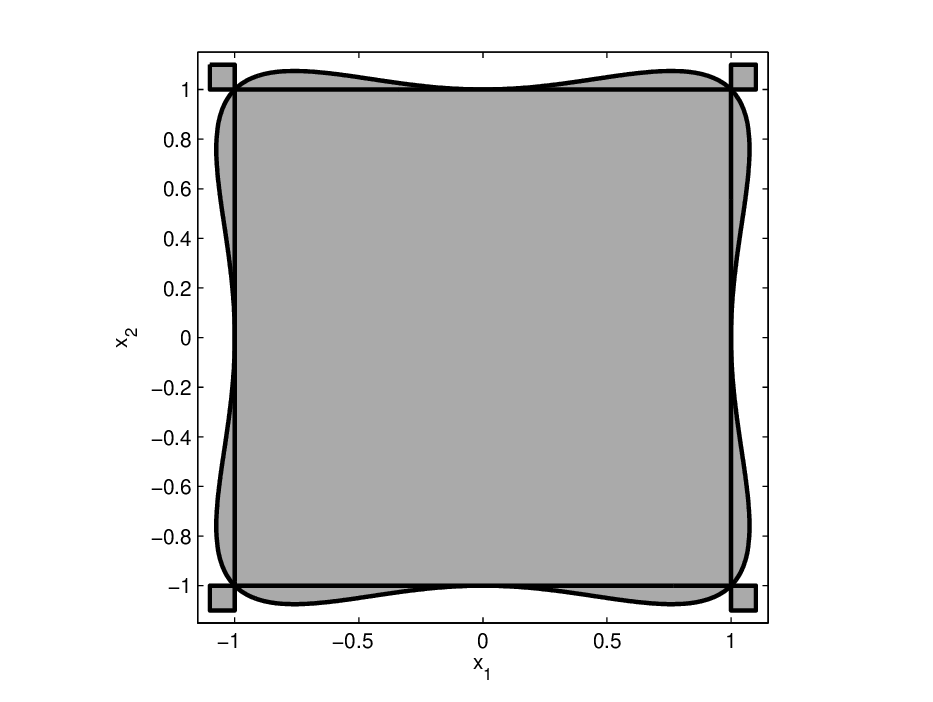}\includegraphics{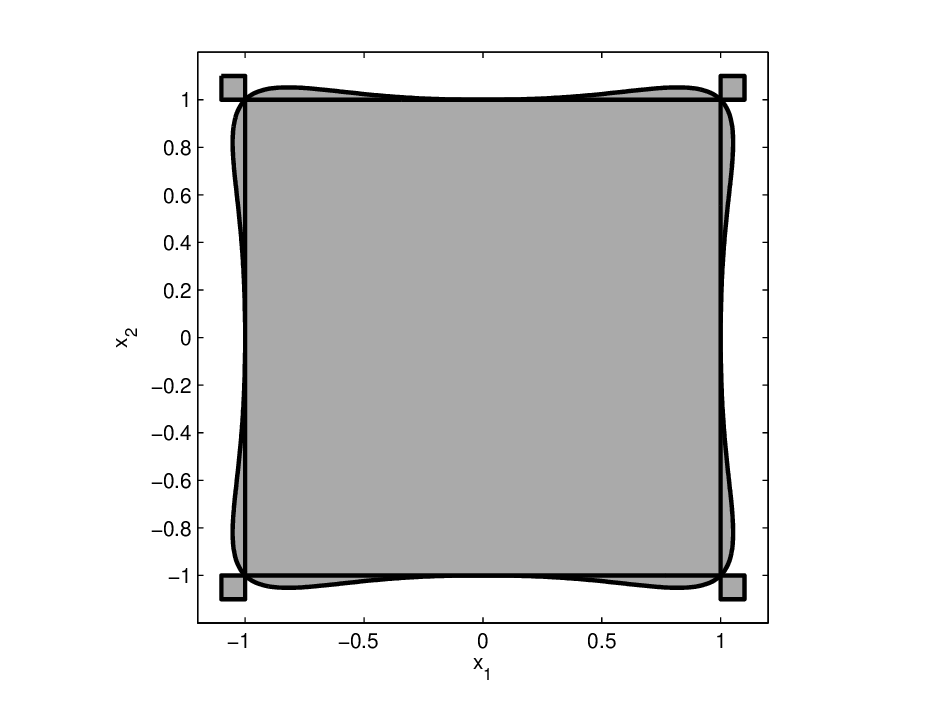}}
\caption{$\K=[-1,1]^2$ and $\G_1^4=\{\x:x_1^4+x_2^4-x_1^2x_2^2\leq1\}$ (left),
$\G_1^6=\{\x:x_1^6+x_2^6-(x_1^4x_2^2+x_1^2x_2^4)/2\leq1\}$ (right)\label{figure-degree4}}
\end{figure}
With $d=6$, the non convex sublevel set $\G_1^6=\{\x:g_6(\x)\leq1\}$ which is displayed in Figure \ref{figure-degree4} (right) is again a better 
approximation of $\K=[-1,1]^2$ than the ellipsoid of minimum volume $\xi=\{\x:\Vert\x\Vert^2\leq 2\}$ that contains 
$\K$, and as ${\rm vol}(\G_1^6)\approx 4.1979$ it provides a better approximation than the sublevel set $\G_1^4$ with $d=4$.

Finally if $\K=\G_1^6$ then $\G_1^4$ is an optimal solution of $\P_0$ with $d=4$, 
that is $\G_1^4$ has minimum volume among all sets $\G_1\supset\G_1^6$ defined by homogeneous polynomials $g\in\P[\x]_4$.
Indeed first we have solved the polynomial optimization problem:
$\rho=\inf_\x\{1-g_4(\x): 1-g_6(\x)\geq0\}$
via the hierarchy of semidefinite relaxations\footnote{We have used the GloptiPoly software \cite{gloptipoly}
dedicated to solving the generalized problem of moments.} defined in \cite{lasserresiopt,lasserrebook2} and at the fifth semidefinite relaxation (i.e. with moments of order $12$) we found
$\rho=0$ with the eight contact points $(\pm 1,\pm1),(\pm 1,0),(0,\pm 1)\in\G_1^4\cap\G_1^6$ as global minimizers!
This shows (up to $10^{-9}$ numerical precision) that $\G_1^6\subset\G_1^4$.
Then again the measure $\nu^*$ defined in (\ref{nu*}) satisfies Theorem \ref{vol-suff-cond}(b)
and so $g_4$ is an optimal solution of problem $\P_0$ with $\K=\G_1^6$ and $d=4$.

At last, the ball $\G_1^2=\{\x: (x_1^2+x_2^2)/2\leq1\}$ is an optimal solution of $\P_0$ with $d=2$
and we have $\K\subset\G_1^6\subset\G_1^4\subset\G_1^2$.

\section{The general case}

We now consider the more general case where the set $\G_1$ is of the form
$\{\x:g(\x-\a)\leq 1\}=:\G_1^\a$ where $\a\in\R^n$ and $g\in\P[\x]_d$.

For every $\a\in\R^n$ and $g\in\R[\x]_d$ (with coefficient vector $\g\in\R^{s(d)}$) define the polynomial $g_\a\in\R[\x]_d$ by
$\x\mapsto g_\a(\x):=g(\x-\a)$ and its sublevel set $\G_1^\a:=\{\x:g_\a(\x)\leq1\}$. 
The polynomial $g_\a$ can be written
\begin{equation}
\label{def}
g_\a(\x)\,=\,g(\x-\a)\,=\,\sum_{\alpha\in\N^n_d}p_\alpha(\a,\g)\,\x^\alpha,\end{equation}
where $\g\in\R^{s(d)}$ and the polynomial $p_\alpha\in\R[\x,\g]$ is linear in $\g$, for every
$\alpha\in\N^n_d$.
Consider the following generalization of $\P_0$:

{\em $\P$: Let $\K\subset\R^n$ be a compact set not necessarily convex and $d\in\N$ an even integer. Find an homogeneous polynomial $g$ of degree $d$ and a point $\a\in\R^n$ such that the sublevel set
$\G_1^\a:=\{\x\,:\,g(\x-\a)\leq 1\}$ contains $\K$ and has minimum volume among all such sublevel sets
with this inclusion property.}

When $d=2$ one retrieves the general (non symmetric) L\"owner-John ellipsoid problem. For $d>2$,
an even more general problem would be to find a (non homogeneous) polynomial $g$
of degree $d$ such that $\K\subset\G_1=\{\x:g(\x)\leq 1\}$ and $\G_1$ has minimum volume among
all such set $\G_1$ with this inclusion property. However when $g$ is not homogeneous
we do not have an analogue of Theorem \ref{lemma1} for the Lebesgue-volume ${\rm vol}(\G_1)$.\\

So in view of (\ref{newa}), one wishes to solve the optimization problem
\begin{equation}
\label{minvolume-a}
\mathcal{P}:\quad\rho=\min_{\a\in\R^n,g\in\P[\x]_d}\:\{{\rm vol}(\G^\a_1)\::\:1-g_\a\in C_d(\K)\},\end{equation}
a generalization of (\ref{minvolume}) where $\a=0$. In contrast to $\P_0$, problem $\P$ is not convex and so
computing a global optimal solution is more difficult. In particular we do not provide
an analogue of the numerical scheme for $\P_0$ described in \S \ref{numerical} and the results of this section are mostly of theoretical interest. However we still can show that for every
optimal solution $(\a^*,g^*)\in\R^n\times\P[\x]_d$, there is a characterization of
$\G_1^{\a^*}$ similar to the one obtained for $\P_0$.

Let $\K-\a$
denotes the set $\{\x-\a:\x\in\K\}$, and observe that whenever $g\in\P[\x]_d$,
\begin{equation}
\label{newa}
{\rm vol}(\G_1^\a)\,=\,{\rm vol}(\G_1^0)\,=\,{\rm vol}(\G_1)\,=\,\frac{1}{\Gamma(1+n/d)}\int_{\R^n}\exp(-g(\x))\,d\x.\end{equation}

\begin{thm}
\label{vol-suff-cond-a}
Let $\K\subset\R^n$ be compact with nonempty interior and consider the optimization problem $\mathcal{P}$ in (\ref{minvolume-a}).

{\rm (a)} $\mathcal{P}$ has an optimal solution $(\a^*,g^*)\in\R^n\times \P[\x]_{d}$.

{\rm (b)} Let $(\a^*,g^*)\in\R^n\times \P[\x]_{d}$ be an optimal solution of $\mathcal{P}$. 
If $g^*\in{\rm int}(\P[\x]_d)$ then there exists a finite Borel measure $\mu^*\in \mathcal{M}(\K-\a^*)_+$ 
such that
\begin{eqnarray}
\label{kkt-suff-a}
\int_{\R^n}\x^\alpha\exp(-g^*)d\x&=&\int_{\K-\a^*}\x^\alpha\,d\mu^*,\qquad\forall\vert\alpha\vert=d\\
\label{kkt-suff1-a}
\int_{\K-\a^*}(1-g^*)\,d\mu^*&=&0;\quad \mu^*(\K-\a^*)=\frac{n}{d}\int_{\R^n}\exp(-g^*)\,d\x.
\end{eqnarray}
In particular, $\mu^*$ is supported on the set $V:=\{\x\in\K-\a^*: g^*(\x)=1\}\,(=\K\cap\G^{\a^*}_1)$ and in fact,
$\mu^*$ can be substituted with another 
measure $\nu^*\in \mathcal{M}(\K-\a^*)_+$ supported on at most ${n+d-1\choose d}$ contact points of $V$ with same moments
of order $d$.
\end{thm}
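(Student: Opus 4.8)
The plan is to reduce Theorem~\ref{vol-suff-cond-a} to Theorem~\ref{vol-suff-cond} by viewing the problem as an outer minimization over the translation parameter $\a$ of a family of problems of the type already solved. For each fixed $\a\in\R^n$, consider the translated set $\K-\a$ and the convex optimization problem $\mathcal{P}(\a)$ obtained from (\ref{minvolume}) by replacing $\K$ with $\K-\a$. The key observation, recorded in (\ref{newa}), is that the objective ${\rm vol}(\G_1^\a)={\rm vol}(\G_1)$ does not depend on $\a$, and that $1-g_\a\in C_d(\K)$ if and only if $1-g\in C_d(\K-\a)$ (since $g_\a(\x)\geq 0$ on $\K$ iff $g(\x-\a)\geq 0$ on $\K$ iff $g\geq 0$ on $\K-\a$, and similarly for $1-g$). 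Hence $\mathcal{P}$ in (\ref{minvolume-a}) is exactly $\min_{\a\in\R^n}\rho(\a)$ where $\rho(\a)$ is the optimal value of $\mathcal{P}(\a)$. Since $\K-\a$ is compact with nonempty interior for every $\a$, Theorem~\ref{vol-suff-cond}(a) applies and gives, for each $\a$, a \emph{unique} optimal solution $g_\a^*\in\P[\x]_d$ with value $\rho(\a)$.

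The main obstacle is part~(a): showing that the outer infimum over $\a\in\R^n$ is attained. The difficulty is that $\a$ ranges over a noncompact set, so one must rule out minimizing sequences $\a_k$ escaping to infinity. I would first show $\a\mapsto\rho(\a)$ is lower semicontinuous: if $\a_k\to\a$, the corresponding feasible constraints converge and, by the compactness argument in the proof of Theorem~\ref{vol-suff-cond}(a) (the feasible sets $F$ are uniformly compact on a neighborhood of $\a$ because $\K-\a$ stays within a fixed compact set and retains nonempty interior uniformly), one extracts from the $g_{\a_k}^*$ a limit that is feasible for $\mathcal{P}(\a)$, giving $\rho(\a)\leq\liminf_k\rho(\a_k)$. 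To handle $\|\a_k\|\to\infty$, note that if $\|\a\|$ is large then $\K-\a$ is a compact set far from the origin of diameter bounded by ${\rm diam}(\K)$; any homogeneous $g\in\P[\x]_d$ with $g\leq 1$ on $\K-\a$ must then have small coefficients (roughly, $g$ evaluated at points of norm $\sim\|\a\|$ is at most $1$, forcing the coefficient vector to scale like $\|\a\|^{-d}$), so $\int\exp(-g)\,d\x\to\infty$ as $\|\a\|\to\infty$ by dominated convergence from below; hence $\rho(\a)\to\infty$ and the infimum is attained on a compact set of $\a$'s. This is the step I expect to require the most care.

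Once existence in (a) is established, part~(b) is immediate: let $(\a^*,g^*)$ be an optimal solution. Then $g^*$ must be the unique optimal solution of the inner problem $\mathcal{P}(\a^*)$, i.e.\ of problem $\mathcal{P}$ in (\ref{minvolume}) with $\K$ replaced by $\K-\a^*$ — for otherwise a strictly better $g$ for $\K-\a^*$ would give a strictly smaller value at the same $\a^*$, contradicting optimality of $(\a^*,g^*)$. Applying Theorem~\ref{vol-suff-cond}(b) verbatim to the compact set $\K-\a^*$ (which has nonempty interior) produces a finite Borel measure $\mu^*\in M(\K-\a^*)$ satisfying (\ref{kkt-suff}) and (\ref{kkt-suff1}) with $\K$ replaced by $\K-\a^*$, which are precisely (\ref{kkt-suff-a}) and (\ref{kkt-suff1-a}). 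The support statement follows the same way: $\langle 1-g^*,\y^*\rangle=0$ with $1-g^*\in C_d(\K-\a^*)$ and $\mu^*\in M(\K-\a^*)$ forces $\mu^*$ to sit on $\{\x\in\K-\a^*:g^*(\x)=1\}$, equivalently (after translating back by $\a^*$) on $V=\{\x\in\K:g^*(\x-\a^*)=1\}=\K\cap\G_1^{\a^*}$; and the reduction to at most ${n+d-1\choose d}$ atoms via the Mulholland--Rogers / Anastassiou result is unchanged. The only genuinely new content relative to Theorem~\ref{vol-suff-cond} is the existence argument in (a); uniqueness is lost precisely because the outer function $\a\mapsto\rho(\a)$ need not be strictly convex (indeed it need not be convex), which is why the statement only claims existence of $(\a^*,g^*)$.
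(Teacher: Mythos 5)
Your proposal follows essentially the same route as the paper: rewrite $\mathcal{P}$ as an outer minimization over $\a$ of the inner translated problems, extract a convergent minimizing sequence $(\a_k,g_k)$ using the compactness of $\{h\in C_d(\K):\langle\z,h\rangle\leq z_0\}$ and the boundedness of the $\a_k$, pass to the limit via Fatou, and then obtain (b) by applying Theorem~\ref{vol-suff-cond}(b) verbatim to $\K-\a^*$ after noting that $g^*$ must solve the inner problem at $\a^*$. One caveat on your coercivity step (which the paper itself only asserts in one line): it is not true that $1-g\in C_d(\K-\a)$ forces \emph{all} coefficients of $g$ to scale like $\Vert\a\Vert^{-d}$ --- the constraint only controls $g$ on the narrow cone of directions spanned by $\K-\a$, and coefficients acting transversally need only be $O(\epsilon^{-d})$ where $\epsilon$ is the inradius of $\K$; the clean justification is that by homogeneity $\G_1$ contains the truncated cone $\{t\y:\y\in\K-\a,\,0\leq t\leq1\}$, whose volume grows linearly in $\Vert\a\Vert$, so $\rho(\a)\to\infty$ all the same.
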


The proof is postponed to \S \ref{conda}

\section{A computational procedure}
\label{numerical}
Even though $\mathcal{P}$ in (\ref{minvolume}) is a finite-dimensional convex optimization problem,
it  is hard to solve for mainly two reasons:
\begin{itemize}
\item From Theorem \ref{lemma2}, the gradient and Hessian of the (strictly) convex objective function $g\mapsto \int\exp(-g)$ requires evaluating  integrals of the form
\[\int_{\R^n}\x^\alpha\exp(-g(\x))\,d\x, \quad\forall\alpha\in\N^n_d,\]
a difficult and challenging problem. (And with $\alpha=0$ one obtains the value of the objective function.)
\item The convex cone $C_d(\K)$ has no {\em exact} and  {\em tractable} representation to efficiently handle the constraint $1-g\in C_d(\K)$ in an algorithm for solving problem (\ref{minvolume}).
\end{itemize}
However, below we outline a numerical scheme to 
approximate to any desired $\epsilon$-accuracy (with $\epsilon>0$):

- the optimal value $\rho$ of (\ref{minvolume}),

- the unique optimal solution $g^*\in\P[\x]_d$ of $\mathcal{P}$ obtained in Theorem \ref{vol-suff-cond}.

\subsection{Concerning gradient and Hessian evaluation}
\label{gradient-hessian}
To approximate the gradient and Hessian of the objective function we will use the following result:
\begin{lemma}
Let $g\in{\rm int}(\P[\x]_d)$ with $\G_1=\{\x:g(\x)\leq 1\}$. Then for all $\alpha\in\N^n$
\begin{equation}
\label{gradient-eval}
\int_{\R^n}\x^{\alpha}\exp(-g)\,d\x\,=\,\Gamma\left(1+\frac{n+\vert\alpha\vert}{d}\right)\int_{\G_1}\x^\alpha\,d\x.
\end{equation}
\end{lemma}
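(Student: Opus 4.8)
The plan is to reduce the identity to the homogeneity scaling already exploited in Lemma \ref{lemma1}, now applied to the weighted measure $\x^\alpha\,d\x$ rather than to plain Lebesgue measure. Concretely, I would introduce the function $y\mapsto F_\alpha(y):=\int_{\G_y}\x^\alpha\,d\x$, where $\G_y=\{\x:g(\x)\leq y\}$, and compute its Laplace transform exactly as in the proof of Lemma \ref{lemma1}. By Fubini,
\[
\int_0^\infty e^{-\lambda y}F_\alpha(y)\,dy\,=\,\int_{\R^n}\x^\alpha\Big(\int_{g(\x)}^\infty e^{-\lambda y}\,dy\Big)\,d\x\,=\,\frac{1}{\lambda}\int_{\R^n}\x^\alpha e^{-\lambda g(\x)}\,d\x .
\]

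Next I would perform the substitution $\z=\lambda^{1/d}\x$, using homogeneity $g(\lambda^{1/d}\x)=\lambda g(\x)$ together with the facts $\x^\alpha=\lambda^{-\va/d}\z^\alpha$ and $d\x=\lambda^{-n/d}\,d\z$ (here $\va=|\alpha|$). This gives
\[
\int_0^\infty e^{-\lambda y}F_\alpha(y)\,dy\,=\,\frac{1}{\lambda^{1+(n+\va)/d}}\int_{\R^n}\z^\alpha e^{-g(\z)}\,d\z ,
\]
and since $\lambda^{-1-(n+\va)/d}=\Gamma\!\left(1+\tfrac{n+\va}{d}\right)^{-1}\mathcal{L}_{y^{(n+\va)/d}}(\lambda)$, analyticity/injectivity of the Laplace transform forces
\[
F_\alpha(y)\,=\,\frac{y^{(n+\va)/d}}{\Gamma\!\left(1+\tfrac{n+\va}{d}\right)}\int_{\R^n}\z^\alpha e^{-g(\z)}\,d\z .
\]
Evaluating at $y=1$ and rearranging yields \eqref{gradient-eval}.

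A minor subtlety worth a sentence is finiteness: one must know $\int_{\R^n}|\x^\alpha|e^{-g(\x)}\,d\x<\infty$ before invoking Fubini, which follows because $g\in\P[\x]_d$ guarantees $\G_1$ has finite volume (hence each $\G_y$ does), and the superlevel decay of $e^{-g}$ dominates any polynomial weight — indeed the right-hand side $\int_{\G_1}\x^\alpha\,d\x$ is already finite since $\G_1$ is bounded (because $g$ is coercive, being nonnegative homogeneous with finite-volume sublevel set). I do not expect a serious obstacle here; the only thing to be careful about is that $\x^\alpha$ need not be nonnegative, so I would apply Fubini–Tonelli to $|\x^\alpha|$ first to license the interchange, then run the signed computation. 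The rest is the same scaling bookkeeping as in Lemma \ref{lemma1}, with the exponent $n/d$ replaced by $(n+\va)/d$.
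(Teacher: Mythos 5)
Your argument is correct and is exactly the proof the paper intends: the paper omits it with the remark that it is identical to that of Lemma \ref{lemma1}, and your Laplace-transform/homogeneity computation applied to the weighted measure $\x^\alpha\,d\x$ is precisely that proof, with the exponent $n/d$ replaced by $(n+\vert\alpha\vert)/d$. The only blemish is the parenthetical justification of integrability: membership in $\P[\x]_d$ does not force $g$ to be coercive or $\G_1$ to be bounded (e.g.\ $g(x,y)=(x^2+y^2)^2x^2$ lies in $\P[\x]_6$ yet vanishes on the $y$-axis), but this is a side issue the paper itself does not address and it does not affect the main computation whenever the integrals involved are finite.
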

The proof being identical to that of Theorem \ref{lemma1} is omitted. So Lemma \ref{gradient-eval}
relates in a very simple and explicit manner
all moments of the Borel measure with density $\exp(-g)$ on $\R^n$ 
with those of the Lebesgue measure on the sublevel set $\G_1$.

It turns out that in Henrion et al. \cite{sirev} we have provided a hierarchy of semidefinite programs\footnote{A semidefinite program is a finite-dimensional convex optimization problem which in canonical form reads:
$\min_\x\{\c^T\x:\A_0+\sum_{k=1}^t\A_kx_k\succeq0\}$, where $\c\in\R^t$ and the $\A_k$'s are real symmetric matrices.
Importantly, up to arbitrary fixed precision it can be solved in time polynomial in the input size of the problem.}
to approximate
as closely as desired, any finite moment sequence $(z_\alpha)$, $\alpha\in\N^n_\ell$, defined by 
\[z_\alpha\,=\,\int_\Omega\x^\alpha\,d\x,\quad\alpha\in\N^n_\ell.\]
where $\Omega$ is a compact basic semi-algebraic set of the form
$\{\x:g_j(\x)\geq0,\:j=1,\ldots,m\}$ for some polynomials $(g_j)\subset\R[\x]$. 
Let us briefly explain how it works when $\Omega=\G_1$ and $\G_1$ is bounded. Let $\B\supset\G_1$ be a box that contains $\G_1$ and let $\lambda$ be the restriction
of the Lebesgue measure on $\B$ of which moments
\[\lambda_\alpha\,=\,\int_\B \x^\alpha\,d\x,\qquad\alpha\in\N^n,\]
are easy to compute. Write $\B$ as $\{\,\x:\theta_i(\x)\geq0,\,i=1,\ldots,n\,\}$ where 
$\x\mapsto \theta_i(\x)=(\overline{a_i}-x_i)(x_i-\underline{a_i})$ for some 
scalars $(\overline{a_i},\underline{a_i})$ that define the box $\B$.
Then ${\rm vol}(\G_1)$ is the optimal value of the optimization problem:
\begin{equation}
\label{volsirev}
\sup_{\mu,\nu}\,\{\,\mu(\R^n):\:\mu+\nu=\lambda;\:\mu(\B\setminus\G_1)=0,\quad \mu,\nu\in \mathcal{M}(\B)_+\,\},\end{equation}
where $\mathcal{M}(\B)_+$ is the space of finite Borel measures on $\B$. The dual of the above problem reads:
\begin{equation}
\label{volsirevdual}
\inf_{p\in\R[\x]}\:\{\,\int_\B p(\x)\,\lambda(d\x)\,:\: p(\x)\geq0\mbox{ on $\B$};\:p(\x)\geq1\mbox{ on $\G_1$}\,\}.\end{equation}
In the dual a minimizing sequence $(p_k)\subset\R[\x]$, $k\in\N$, approximates the indicator function 
$1_{\G_1}$  of the set $\G_1$ by polynomials nonnegative on $\B$ and
of increasing degree. 
To approximate ${\rm vol}(\G_1)$
we proceed as in \cite{sirev} and use the following hierarchy of semidefinite relaxations of (\ref{volsirev}) indexed by $k\in\N$:
\begin{equation}
\label{sdp-1}
\begin{array}{rll}
\rho_k=\displaystyle\sup_{\y,\z} &y_0&\\
\mbox{s.t.}&y_\alpha+z_\alpha&=\lambda_\alpha,\quad\alpha\in\N^n;\quad \vert\alpha\vert\leq 2k\\
&\M_k(\y),\M_k(\z)&\succeq0\\
&\M_{k-d/2}(1-g\,\y)&\succeq0\\
&\M_{k-1}(\theta_i\,\z)&\succeq0,\quad i=1,\ldots,n,
\end{array}\end{equation}
where $\y=(y_\alpha)$ (resp. $\z=(z_\alpha)$), $\alpha\in\N^n$, is a sequence that approximates
the moment sequences of $\mu$ (resp. $\nu$). The matrix $\M_k(\y)$ is the 
moment matrix of order $k$ associated with $\y$, whereas $\M_{k-d/2}(1-g\,\y)$ (resp. $\M_{k-1}(\theta_i\,\y)$) is the localizing matrix 
associated with $\y$ and $1-g$ (resp. $\theta_i$); see e.g. \cite{sirev}.

For each $k\in\N$, (\ref{sdp-1}) is a semidefinite program
and in \cite{sirev} it is proved that
$(\rho_k)$ is monotone nonincreasing and 
$\rho_k\to{\rm vol}(\G_1)$ as $k\to\infty$. In addition, let $\y^k=(\y^k_\alpha)$, $\alpha\in\N^n_{2k}$,
be an optimal solution of (\ref{sdp-1}). Then for each fixed $\alpha\in\N^n$,
\[y^k_\alpha\to\,\int_{\G_1}\x^\alpha\,d\x,\qquad\mbox{as $k\to\infty$.}\]
For more details, the interested reader is referred to \cite{sirev}.
Not surprisingly, it is hard to approximate $1_{\G_1}$
by polynomials and in particular this is reflected by the well-known Gibbs effect
in the dual (\ref{volsirevdual}) (and hence in the dual of (\ref{sdp-1})), which can make the convergence 
$\rho_k\to{\rm vol}(\G_1)$ slow. Below we show how one can drastically improve this convergence and fight the Gibbs effect.

\subsection*{Improving the above algorithm.}

Observe that in (\ref{sdp-1}) we have not used the fact that $g$ is 
homogeneous of degree $d$. However from Lemma 1 in Lasserre \cite{lasserre-dcg}, for every $k\in\N$ one has:
\begin{equation}
\label{dcg-lem-1}
\int_{\{\x\,:\,g(\x)\leq 1\}}\x^\alpha\,g(\x)^k\,d\x\,=\,\frac{n+\vert\alpha\vert}{n+kd+\vert\alpha\vert}\,\int_{\{\x\,:\,g(\x)\leq 1\}}\,\x^\alpha\,d\x.\\
\end{equation}
Therefore if we write
$g(\x)=\sum_\beta g_\beta \x^\beta$, then (\ref{dcg-lem-1}) with $k=1$ translates into the linear equality constraints
\begin{equation}
\label{help-1}
\sum_{\vert\beta\vert=d}g_\beta\, y_{\alpha+\beta}\,=\,\frac{n+\vert\alpha\vert}{n+d+\vert\alpha\vert}\,y_\alpha,
\qquad \alpha\in\N^n,
\end{equation}
on the moments $(y_\alpha)$ of the Lebesgue measure on $\G_1$. So we may and will
include the linear constraints (\ref{help-1}) in the semidefinite program (\ref{sdp-1}), which yields
the resulting semidefinite program:
\begin{equation}
\label{sdp-2}
\begin{array}{rll}
\tilde{\rho_k}=\displaystyle\sup_{\y,\z} &y_0&\\
\mbox{s.t.}&y_\alpha+z_\alpha&=\lambda_\alpha,\quad\alpha\in\N^n;\quad \vert\alpha\vert\leq 2k\\
&\M_k(\y),\M_k(\z)&\succeq0\\
&\M_{k-d/2}(1-g\,\y)&\succeq0\\
&\M_{k-1}(\theta_i\,\z)&\succeq0,\quad i=1,\ldots,n\\
&\displaystyle\sum_{\vert\beta\vert=d}g_\beta\, y_{\alpha+\beta}&=\frac{n+\vert\alpha\vert}{n+d+\vert\alpha\vert}y_\alpha,\quad\vert\alpha\vert\leq 
2k-d,
\end{array}\end{equation}
and obviously ${\rm vol}(\G_1)\leq\tilde{\rho}_k\leq\rho_k$ for all $k$. 
To appreciate how powerful can be these additional constraints,
consider the following two simple illustrative examples:
\begin{example}
\label{ex11}
{\rm Let $n=1$ and let $\G_1:=\{x: 4x^2\leq 1\}\subset\B=[-1,1]$ so that ${\rm vol}(\G_1)=1$.
Table \ref{tab1} below displays results obtained by solving
(\ref{sdp-1}) and (\ref{sdp-2}) respectively. 
As one may see in Table \ref{tab1}, the convergence 
$\rho_k\to 1$ is rather slow (because of the Gibbs effect in the dual) whereas
the convergence $\tilde{\rho_k}\to 1$ is very fast.
And indeed $\tilde{\rho_k}$ provides with a much better approximation of ${\rm vol}(\G_1)$
than $\rho_k$; already with moments up to order $10$ only, $\tilde{\rho_5}$ 
provides with a very good approximation.

\begin{table}
\caption{Comparing $\rho_k$ and $\tilde{\rho_k}$ for the interval $[-1/2,1/2]$.\label{tab1}}
{\begin{tabular}{@{}ccc@{}}
$k$& $\rho_k$ & $\tilde{\rho_k}$\\
%\colrule
4&1.689&1.156\\
6&1.463&1.069\\
8&1.423&1.025\\
10&1.382&1.010\\
12&1.305&1.003\\
14&1.289&1.001\\
16&1.267&1.000\\
18&1.229&1.000\\
20&1.221&1.000\\
%\botrule
\end{tabular}}
\end{table}
}\end{example}

\begin{example}
\label{ex22}
{\rm Let $n=2$ and let $\G_1=\{\x: \Vert\x\Vert^2\leq 1\}$ be the unit ball with volume $\pi$.
Table \ref{tab1} below displays results obtained by solving
(\ref{sdp-1}) and (\ref{sdp-2}) respectively. Of course the precision also depends on the size of the box 
$\B$ that contains $\G_1$. And so we have taken a box $\B=[-a,a]^2$ with $a$ ranging from $1$ to $2$.
As one may see in Table \ref{tab2}, $\tilde{\rho_k}$ is a much better approximation of $\pi$
than $\rho_k$ and already with moments up to order $8$ only, quite good approximations
are obtained.

\begin{table}
\caption{Comparing $\rho_k$ and $\tilde{\rho_k}$ for the unit sphere.\label{tab2}}
{\begin{tabular}{@{}ccccc@{}}
$a\,\setminus\,\rho_k,\tilde{\rho_k}$& $\rho_3$ & $\tilde{\rho_3}$ & $\rho_4$&$\tilde{\rho_4}$\\
%\colrule
2.0&7.63&4.99&7.58&4.01\\
\hline
1.5&6.12&3.72&5.60&3.35\\
\hline
1.4&5.71&3.55&5.38&3.27\\
\hline
1.3&5.38&3.41&5.04&3.21\\
\hline
1.2&5.02&3.31&4.70&3.17\\
\hline
1.1&4.56&3.36&4.32&3.15\\
\hline
1.0&3.91&3.20&3.87&3.144\\
%\botrule
\end{tabular}}
\end{table}
}\end{example}

Hence in any minimization algorithm for solving $\mathcal{P}$, 
and given a current iterate $g\in\P[\x]_d$, one may approximate as closely as desired 
the value at $g$ of the objective function as well as its gradient and Hessian
by solving the  semidefinite program (\ref{sdp-2}) for sufficiently large $k$.

\subsection{Concerning the convex cone $C_d(\K)$}

We here assume that the compact (and non necessarily convex) set $\K\subset\R^n$ is a basic semi-algebraic set defined by
\begin{equation}
\label{setk}
\K\,=\,\{\x\in\R^n_+\::\: w_j(\x)\geq0,\:j=1,\ldots,s\},\end{equation}
for some given polynomials $(w_j)\subset\R[\x]$. Denote
by $\Sigma_k\subset\R[\x]_{2k}$ the convex cone of SOS (sum of squares) 
polynomials of degree at most $2k$, and let 
$w_0$ be the constant polynomial equal to $1$, and
$v_j:=\lceil {\rm deg}(w_j)/2\rceil$, $j=0,\ldots,s$.

With $k$ fixed, arbitrary, we now replace the condition
$1-g\in C_d(\K)$ with the stronger condition $1-g\in \mathcal{C}_k\:(\subset C_d(\K))$ where
\begin{equation}
\label{lmi}
\mathcal{C}_k\,=\,\left\{\sum_{j=0}^s \sigma_j\,w_j\::\:\sigma_j\in\Sigma_{k-v_j},\:j=0,1,\ldots,s\,\right\}.
\end{equation}
It turns out that membership in $\mathcal{C}_k$ translates into 
Linear Matrix Inequalities\footnote{A Linear Matrix Inequality (LMI) is a constraint
of the form $\A(\x):=\A_0+\sum_{\ell=1}^t\A_\ell x_\ell\succeq 0$ where each $\A_\ell$, $\ell=0,\ldots,t$, is a real symmetric matrix;
so each entry of the real symmetric matrix $\A(\x)$ is affine in $\x\in\R^t$.
 An LMI always define a convex set, i.e., the set $\{\x\in\R^t:\A(\x)\succeq 0\}$ 
is convex.}  (LMIs) on the coefficients of the polynomials
$g$ and the SOS $\sigma_j$'s; see e.g. \cite{lasserrebook2}. If $\K$ has nonempty interior
then the convex cone $\mathcal{C}_k$ is closed.

\begin{assumption}[Archimedean assumption]
\label{ass-1}
There exist $M>0$ and $k\in\N$ such that the quadratic polynomial $\x\mapsto M-\Vert\x\Vert^2$
belongs to $\mathcal{C}_k$.
\end{assumption}
Notice that Assumption \ref{ass-1} is not restrictive. Indeed, $\K$ being compact,
if one knows an explicit value $\M>0$ such that $\K\subset\{\x:\Vert\x\Vert<M\}$,
then its suffices to add to the definition of $\K$ the redundant quadratic constraint
$w_{s+1}(\x)\geq0$, where $w_{s+1}(\x):=M^2-\Vert\x\Vert^2$.

Under Assumption \ref{ass-1}, $C_d(\K)=\displaystyle\overline{\bigcup_{k=0}^\infty\mathcal{C}_k}$,
that is, the family of convex cones $(\mathcal{C}_k)$, $k\in\N$, provide a converging
sequence of (nested) {\em inner approximations} of the larger convex cone  $C_d(\K)$.

\subsection{A numerical scheme}

In view of the above it is natural to 
consider the following hierarchy of convex optimization problems $(\mathcal{P}_k)$, $k\in\N$,
where for each fixed $k$:
\begin{equation}
\label{sdp-k}
\begin{array}{rl}
\rho_k=\displaystyle\min_{g,\sigma_j}&\displaystyle\int_{\R^n}\exp(-g)\,d\x\\
\mbox{s.t.}&1-g=\displaystyle\sum_{j=0}^s\sigma_jw_j\\
&g_\alpha=0,\quad\forall\vert\alpha\vert<d\\
&g\in\R[\x]_d;\:\sigma_j\in\Sigma_{k-v_j},\:j=0,\ldots,s.
\end{array}\end{equation}
Of course the sequence $(\rho_k)$, $k\in\N$, is monotone non increasing
and $\rho_k\geq\rho$ for all $k$.
Moreover, for each fixed $k\in\N$, $\mathcal{P}_k$ is a convex optimization problem which consists 
of minimizing a strictly convex function under LMI constraints. 

From Corollary \ref{coro1}, $\int_{\R^n}\exp(-g)d\x<\infty$ if and only if $g\in\P[\x]_d$ and so 
the objective function also acts as a barrier for the convex cone $\P[\x]_d$. 
Therefore, to solve $\mathcal{P}_k$ one may use first-order or second-order (local minimization) 
algorithms, starting from an initial guess $g_0\in \P[\x]_d$. At any current iterate 
$g\in\P[\x]_d$ of such an algorithm one may use
the methodology described in \S \ref{gradient-hessian} to approximate the objective function $\int \exp(-g)$ as well as its gradient and Hessian.
Of course as the gradient and Hessian are only approximated, some care is needed to ensure convergence of such an algorithm.
For instance one might try to adapt ideas like the ones described in d'Aspremont \cite{aspremont} 
where for certain optimization problems with noisy gradient information, first-order algorithms with convergence guarantees have been investigated in detail.

\begin{thm}
\label{th-pbk}
Let $\K$ in (\ref{setk}) be compact with nonempty interior and let Assumption \ref{ass-1} hold. Then there exists $k_0$ such that
for every  $k\geq k_0$, problem $\mathcal{P}_k$ in (\ref{sdp-k}) has a unique optimal solution $g^*_k\in\P[\x]_d$.
\end{thm}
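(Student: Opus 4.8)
The plan is to reduce the claim to an application of Theorem~\ref{vol-suff-cond} for a suitably modified feasible set, after first establishing that for $k$ large enough the convex problem $\mathcal{P}_k$ is a genuine optimization over $\P[\x]_d$ with a nonempty, Slater-feasible, and ``coercive'' feasible region. First I would observe that since $\int_{\R^n}\exp(-g)\,d\x<\infty$ exactly when $g\in\P[\x]_d$ (Corollary~\ref{coro1}), any $g$ feasible for $\mathcal{P}_k$ with finite objective value automatically lies in $\P[\x]_d$; moreover the constraint $1-g=\sum_j\sigma_jw_j$ with $\sigma_j\in\Sigma_{k-v_j}$ forces $1-g\in\mathcal{C}_k\subset C_d(\K)$, so the feasible set of $\mathcal{P}_k$ (restricted to finite-value points) is a subset of the feasible set of $\mathcal{P}$ in (\ref{minvolume}), intersected with the linear condition that $g$ be homogeneous of degree $d$.

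Next I would show $\mathcal{P}_k$ is feasible for all large $k$. Using the explicit barrier polynomial $\x\mapsto\theta(\x)=M^{-1}\Vert\x\Vert^d$ from the proof of Theorem~\ref{vol-suff-cond}, we have $1-\theta>0$ on $\K$ for $M$ large; since $\K$ is compact basic semi-algebraic with the Archimedean property (Assumption~\ref{ass-1}), Putinar's Positivstellensatz guarantees $1-\theta\in\mathcal{C}_{k}$ for some $k_0$, hence $g=\theta$ is feasible for $\mathcal{P}_k$ whenever $k\geq k_0$, and its objective value is finite, so $\theta\in\P[\x]_d$. This $k_0$ is the one in the statement. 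Then I would argue compactness of the effective feasible set exactly as in part~(a) of Theorem~\ref{vol-suff-cond}: pick $\z\in\mathrm{int}(C_d(\K)^*)$ (which exists by Lemma~\ref{empty} since $\K$ has nonempty interior), note $1-g\in\mathcal{C}_k\subset C_d(\K)$ gives $\langle\z,g\rangle\le z_0$, and combine with $g\in C_d(\K)$ and the Faraut--Kor\'anyi compactness lemma to bound $g$; sublevel-restricting the objective by a fixed feasible value (as in the definition of $F$ there) then yields a compact convex effective feasible set on which the strictly convex continuous function $g\mapsto\int\exp(-g)$ attains its minimum. Uniqueness of the minimizer follows from strict convexity of the objective (Theorem~\ref{lemma2}) together with convexity of the feasible set, since the minimizing $g$ is unique even though the certificate multipliers $\sigma_j$ need not be.

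The main obstacle I expect is the passage from ``$1-\theta$ is strictly positive on $\K$'' to ``$1-\theta\in\mathcal{C}_k$ for some finite $k$'': this is precisely where the Archimedean Assumption~\ref{ass-1} is used, invoking Putinar's representation theorem, and one must be a little careful that the strict positivity of $1-\theta$ on the compact $\K$ (not just nonnegativity) is what licenses the finite-degree SOS certificate, and that the degree $k_0$ can be taken uniform since $\theta$ is a single fixed polynomial. A secondary subtlety is that $\mathcal{P}_k$ as written optimizes over the pair $(g,\sigma_0,\dots,\sigma_s)$ rather than over $g$ alone, so to speak of ``the unique optimal solution $g^*_k$'' one should phrase uniqueness at the level of $g$: the projection of the optimal set onto the $g$-coordinate is a singleton by strict convexity, while the accompanying $\sigma_j$ may be non-unique; I would state and use it in that form. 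Everything else (continuity of the objective on the interior of its domain, closedness of $\mathcal{C}_k$ when $\K$ has nonempty interior, attainment) is routine and parallels the proof of Theorem~\ref{vol-suff-cond}(a).
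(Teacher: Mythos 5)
Your proposal is correct and follows essentially the same route as the paper's proof: establish feasibility for large $k$ via Putinar's Positivstellensatz applied to a fixed scaled homogeneous polynomial (the paper uses $g_0(\x)=\sum_i x_i^d$ scaled by $M$, you use $M^{-1}\Vert\x\Vert^d$, which is immaterial), then obtain compactness of the sublevel-restricted feasible set exactly as in Theorem~\ref{vol-suff-cond}(a) using closedness of $\mathcal{C}_k$, and conclude uniqueness of $g^*_k$ from strict convexity while noting the certificate $(\sigma_j)$ need not be unique. No gaps.
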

\begin{proof}
Firstly, $\mathcal{P}_k$ has a feasible solution for sufficiently large $k$. Indeed consider
the polynomial $\x\mapsto g_0(\x)=\sum_{i=1}^nx_i^{d}$ which belongs to $\P[\x]_d$.
Then as $\K$ is compact, $M-g_0>0$ on $\K$ for some $M$ and so by Putinar's Positivstellensatz \cite{putinar},
$1-g_0/M\in\mathcal{C}_k$ for some $k_0$ (and hence for all $k\geq k_0$). Hence 
$g_0/M$ is a feasible solution for $\mathcal{P}_k$ for all $k\geq k_0$. Of course, as $\mathcal{C}_k\subset C_d(\K)$,
every feasible solution $g\in\P[\x]_d$ satisfies $0\leq g\leq 1$ on $\K$. So proceeding as in the proof of Theorem
\ref{vol-suff-cond} and using the fact that $\mathcal{C}_k$ is closed, the set
\[\{\,g\in\P[\x]_d\cap\mathcal{C}_k:\:\int_{\R^n}\exp(-g)d\x\leq\int_{\R^n}\exp(-\frac{g_0}{M})d\x\,\},\]
is compact. And as the objective function is strictly convex and lower semi-continuous, the optimal solution
$g^*_k\in\P[\x]_d\cap\mathcal{C}_k$ is unique (but the representation of
$1-g^*_k$ in (\ref{sdp-k}) is not unique in general).
\end{proof}

We now consider the asymptotic behavior of the solution of (\ref{sdp-k}) as $k\to\infty$.

\begin{thm}
Let $\K$ in (\ref{setk}) be compact with nonempty interior and let Assumption \ref{ass-1} hold.
If $\rho$ (resp. $\rho_k$) is the optimal value of $\mathcal{P}$ (resp. $\mathcal{P}_k$)
then $\rho=\displaystyle\lim_{k\to\infty}\rho_k$. Moreover, for every $k\geq k_0$, let $g^*_k\in\P[\x]_d$ be the unique 
optimal solution of $\mathcal{P}_k$. Then as $k\to\infty$, $g^*_k\to g^*$ where $g^*$ is the unique optimal solution of $\mathcal{P}$. 
\end{thm}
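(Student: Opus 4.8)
The plan is to establish the two convergence statements—$\rho_k\to\rho$ and $g^*_k\to g^*$—by a standard compactness-plus-uniqueness argument, relying on the inner approximation property $C_d(\K)=\overline{\bigcup_k\mathcal{C}_k}$ guaranteed by Assumption \ref{ass-1}, together with the strict convexity of the objective from Theorem \ref{lemma2}.

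First I would prove $\lim_k\rho_k=\rho$. Since $\mathcal{C}_k\subset\mathcal{C}_{k+1}\subset C_d(\K)$, the feasible set of $\mathcal{P}_k$ grows with $k$ and is contained in that of $\mathcal{P}$, so $(\rho_k)$ is non-increasing and $\rho_k\geq\rho$ for all $k\geq k_0$; hence $\rho_\infty:=\lim_k\rho_k$ exists and $\rho_\infty\geq\rho$. For the reverse inequality, let $g^*\in\P[\x]_d$ be the unique optimal solution of $\mathcal{P}$ (Theorem \ref{vol-suff-cond}(a)). The issue is that $1-g^*$ need only lie in $C_d(\K)$, not in some $\mathcal{C}_k$. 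To get around this I would perturb: fix $\varepsilon>0$ and set $g_\varepsilon:=(1-\varepsilon)g^*+\varepsilon\,\theta$, where $\x\mapsto\theta(\x)=M^{-1}\Vert\x\Vert^d$ is chosen (as in the proof of Theorem \ref{vol-suff-cond}) so that $1-\theta>0$ on $\K$. Then $g_\varepsilon\in\P[\x]_d$ (the cone is convex, by Proposition \ref{prop1}) and $1-g_\varepsilon=(1-\varepsilon)(1-g^*)+\varepsilon(1-\theta)$ is strictly positive on $\K$, because $1-g^*\geq0$ and $1-\theta>0$ there. By Putinar's Positivstellensatz (valid under Assumption \ref{ass-1}), $1-g_\varepsilon\in\mathcal{C}_k$ for $k$ large enough, so $g_\varepsilon$ is feasible for $\mathcal{P}_k$ and thus $\rho_k\leq\int_{\R^n}\exp(-g_\varepsilon)\,d\x$ for all large $k$; hence $\rho_\infty\leq\int_{\R^n}\exp(-g_\varepsilon)\,d\x$. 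Letting $\varepsilon\downarrow0$, pointwise convergence $g_\varepsilon\to g^*$ plus the bound $\exp(-g_\varepsilon)\leq 1$ on a fixed large ball and, off that ball, domination by (say) $\exp(-\tfrac12\min(g^*,\theta))$ which is integrable—or more simply Fatou applied to $\liminf_\varepsilon\exp(-g_\varepsilon)=\exp(-g^*)$ combined with the reverse inequality from convexity of $t\mapsto e^{-t}$—gives $\int\exp(-g_\varepsilon)\to\int\exp(-g^*)=\Gamma(1+n/d)\rho$. Wait, I must be careful with the normalizing constant: $\rho$ here denotes $\int\exp(-g^*)$ in problem (\ref{minvolume}), so in fact $\rho_\infty\leq\rho$, giving $\rho_\infty=\rho$.

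Next I would prove $g^*_k\to g^*$. For each $k\geq k_0$, by Theorem \ref{th-pbk} there is a unique $g^*_k\in\P[\x]_d\cap\mathcal{C}_k$; since $\mathcal{C}_k\subset C_d(\K)$, each $g^*_k$ is feasible for $\mathcal{P}$, and $\int\exp(-g^*_k)=\rho_k\leq\rho_{k_0}$. As in the proof of Theorem \ref{th-pbk}, the sublevel set $\{g\in\P[\x]_d\cap C_d(\K):\int\exp(-g)\leq\rho_{k_0}\}$ is compact (using an interior point $\z\in{\rm int}\,C_d(\K)^*$ and Corollary I.1.6 of Faraut–Korányi, exactly as in Theorem \ref{vol-suff-cond}(a)), so the sequence $(g^*_k)$ lies in a compact set and has a convergent subsequence $g^*_{k_i}\to\bar g$ with $\bar g\in\P[\x]_d$ and $1-\bar g\in C_d(\K)$ (the latter cone being closed). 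By lower semicontinuity—indeed continuity on the interior of its domain—of $g\mapsto\int\exp(-g)$, one gets $\int\exp(-\bar g)\leq\liminf_i\int\exp(-g^*_{k_i})=\lim_k\rho_k=\rho$. Hence $\bar g$ is feasible for $\mathcal{P}$ with objective value $\leq\rho$, so it is optimal; by uniqueness in Theorem \ref{vol-suff-cond}(a), $\bar g=g^*$. Since every convergent subsequence of $(g^*_k)$ has the same limit $g^*$ and the sequence lives in a compact set, the full sequence converges: $g^*_k\to g^*$. The main obstacle is the first half—passing from $\rho_k\leq\text{(something feasible for }\mathcal{P}_k)$ to $\rho_\infty\leq\rho$—because the optimizer $g^*$ of $\mathcal{P}$ is generally \emph{not} feasible for any $\mathcal{P}_k$ (its defining inequality $1-g^*\in C_d(\K)$ may fail to be strict), so the strict-positivity perturbation step and the continuity-of-the-integral limit $\varepsilon\downarrow0$ are the technical heart of the argument; everything else is routine compactness and uniqueness bookkeeping already carried out in the earlier proofs.
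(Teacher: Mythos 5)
Your proof is correct and follows the same overall architecture as the paper's: monotonicity gives $\rho_k\downarrow\rho_\infty\geq\rho$; a strict-feasibility perturbation of $g^*$ plus Putinar's Positivstellensatz gives $\rho_\infty\leq\rho$; and the Faraut--Kor\'anyi compactness argument, Fatou's lemma, and uniqueness of the optimizer of $\mathcal{P}$ yield $g^*_k\to g^*$ via the standard subsequence principle. The one place you genuinely diverge is the perturbation itself: you take the convex combination $g_\varepsilon=(1-\varepsilon)g^*+\varepsilon\theta$ with $\theta$ a fixed Slater point, which forces you into a secondary limit argument as $\varepsilon\downarrow 0$ (your convexity bound $\int\exp(-g_\varepsilon)\leq(1-\varepsilon)\int\exp(-g^*)+\varepsilon\int\exp(-\theta)$ is the clean way to finish it, and it does close the gap --- the Fatou/domination detour you sketch first is unnecessary). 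The paper instead rescales, using $g^*/(1+\epsilon)$: since $0\leq g^*\leq 1$ on $\K$, the polynomial $1-g^*/(1+\epsilon)$ is bounded below by $\epsilon/(1+\epsilon)$ on $\K$, and homogeneity gives the exact identity $\int\exp(-g^*/(1+\epsilon))\,d\x=(1+\epsilon)^{n/d}\rho$, so no limit interchange is needed at all. Both perturbations work; the paper's exploits the homogeneous structure that the whole paper is built on and is a touch cleaner, while yours is the more generic convex-analytic move. Everything else in your argument matches the paper's proof step for step.
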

\vspace{0.2cm}

\begin{proof}
By Theorem \ref{vol-suff-cond}, $\mathcal{P}$ has a unique optimal solution $g^*\in\P[\x]_d$.
Let $\epsilon>0$ be fixed, arbitrary. As $1-g^*\in C_d(\K)$, the polynomial $1-g^*/(1+\epsilon)$
is strictly positive on $\K$, and so by Putinar's Positivstellensatz \cite{putinar},
$1-g^*/(1+\epsilon)$ belongs to $\mathcal{C}_k$ for all
$k\geq k_\epsilon$ for some integer $k_\epsilon$. Hence the polynomial
$g^*/(1+\epsilon)\in\P[\x]_d$ is a feasible solution of $\mathcal{P}_k$ for all $k\geq k_\epsilon$.
Moreover, by homogeneity,
\begin{eqnarray*}
\int_{\R^n}\exp(-\frac{g^*}{1+\epsilon})\,d\x&=&(1+\epsilon)^{n/d}
\int_{\R^n}\exp(-g^*)\,d\x\\
&=&(1+\epsilon)^{n/d}\rho.
\end{eqnarray*}
This shows that $\rho_k\leq (1+\epsilon)^{n/d}\rho$ for all $k\geq k_\epsilon$.
Combining this with $\rho_k\geq\rho$ and the fact that $\epsilon>0$ was arbitrary, yields the 
convergence $\rho_k\to\rho$ as $k\to\infty$.

Next, let $\y\in{\rm int}(C_d(\K)^*)$ be as in the proof of Theorem \ref{vol-suff-cond}.
From $1-g^*_k\in\mathcal{C}_k$ we also obtain $\langle \y,1-g^*_k\rangle\geq0$, i.e.,
\[y_0\geq \langle \y,g^*_k\rangle,\quad \forall k\geq k_0,\]
Recall that the set $\{g\in C_d(\K):\langle \y,g\rangle \leq y_0\}$  is compact. Therefore there exists a subsequence 
$(k_\ell)$, $\ell\in\N$, and $\tilde{g}\in C_d(\K)$ such that
$g^*_{k_\ell}\to \tilde{g}$ as $\ell\to\infty$. In particular, $1-\tilde{g}\in C_d(\K)$ and
$\tilde{g}_\alpha=0$ whenever $\vert\alpha\vert<d$ (i.e., $\tilde{g}$ is homogeneous of degree $d$).
Moreover, one also has the pointwise convergence
$\lim_{\ell\to\infty}g^*_{k_\ell}(\x)=\tilde{g}(\x)$ for all $\x\in\R^n$.
Hence by Fatou's lemma,
\begin{eqnarray*}
\rho\,=\,\lim_{\ell\to\infty}\rho_{k_\ell}&=&\lim_{\ell\to\infty}\int_{\R^n}\exp(-g^*_{k_\ell}(\x))d\x\\
&\geq&\int_{\R^n}\liminf_{\ell\to\infty}\exp(-g^*_{k_\ell}(\x))d\x\\
&=&\int_{\R^n}\exp(-\tilde{g}(\x))d\x\,\geq\,\rho,
\end{eqnarray*}
which proves that $\tilde{g}$ is an optimal solution of $\mathcal{P}$, and by uniqueness
of the optimal solution, 
$\tilde{g}=g^*$. As $(g_{k_\ell})$, $\ell\in\N$, was an arbitrary
converging subsequence, the whole sequence 
$(g^*_k)$ converges to $g^*$.
\end{proof}

\begin{remark}
If desired one may also impose $g$ to be convex (so that $\G_1$ is also convex) by simply requiring
$\z^T\nabla^2g(\x)\z\geq0$ for all $(\x,\z)$. Then one may enforce such a condition by the stronger condition
$(\x,\z)\mapsto \z^T\nabla^2g(\x)\z$ is SOS (i.e., is in $\Sigma[\x,\z]_{d+1}$). Alternatively, if one considers
sets $\G_1\subset\B$ where $\B$ is some sufficient large box containing $\K$, one may also use the 
weaker convexity condition \[\z\nabla^2 g(\x)\z\geq 0\:\mbox{ for all }(\x,\z)\in\B\times\{\,\z: \Vert \z\Vert^2=1\,\}.\]
By using a Putinar positivity certificate the latter also amounts to adding additional LMIs
to problem (\ref{sdp-k}) (which remains convex).
\end{remark}
\vspace{0.2cm}

\section{Conclusion}

We have considered non convex generalizations $\P_0$ and $\P$  of the L\"owner-John ellipsoid problem
where we now look for an homogeneous polynomial $g$ of (even) degree $d>2$. Importantly,
neither $\K$ not the sublevel set $\G_1$ associated with $g$ are required to be convex. However 
both $\P_0$ and $\P$ have an optimal solution (unique for $\P_0$) and a characterization in terms of contact points
in $\K\cap\G_1$ is also obtained as in L\"owner-John's ellipsoid Theorem. Crucial 
is the fact that the Lebesgue volume of $\G_1$ is a strictly convex function of the coefficients of $g$.
This latter fact also permits to define a hierarchy of convex optimization problems 
to approximate as closely as desired the optimal solution of $\P_0$.

\subsection*{Acknowledgement.} This work was partially supported by a grant from the {\it Gaspar Monge Program for Optimization and Operations Research} (PGMO)
of the {\it Fondation Math\'ematique Jacques Hadamard} (France).
\section{Appendix}
\label{proofs}
\subsection{First-order KKT-optimality conditions.} Consider the finite 
dimensional optimization problem:
\[\inf\,\{\,f(\x):\:\A\x\,=\,\b;\:\x \in\,C\,\},\]
for some real matrix $\A\in\R^{m\times n}$, vector $\b\in\R^m$, some closed convex cone $C\subset\R^n$
(with dual cone $C^*=\{\,\y:\y^T\x\geq0,\:\forall\x\,\in C\,\}$) and some convex and differentiable function $f$ with domain $D$.
Suppose that $C$ has a nonempty interior ${\rm int}(C)$ and Slater's condition holds, that is,
there exists $\x_0\in D\cap {\rm int}(C)$ such that $\A\x_0=\b$. The {\it normal cone} at a point $0\neq\x\in C$
is the set $N_C(\x)=\{\y\in C^*:\,\langle \y,\x\rangle=0\}$ (see e.g. \cite[p. 189]{hiriart1}).

Then by Theorem 5.3.3, p. 188 in \cite{hiriart2}, $\x^*\in C$ is an optimal solution if and only if
there exists $(\lambda,\y)\in\R^m\times N_C(\x^*)$ such that:
\begin{equation}
\label{kktconditions}
\A\x^*=\b;\quad \nabla f(\x^*)+\A^T\lambda\,=\,\y\end{equation}
and $\langle \x^*,\y\rangle =0$ follows because $\y\in N_C(\x^*)$.

\subsection{Measures with finite support.}
We restate the following important result stated in \cite[Theorem 1]{kemperman2} and \cite[Theorem 2.1.1, p. 39]{anastassiou}.
\begin{thm}[\cite{anastassiou,kemperman2}]
\label{th-anastassiou}
Let $f_1,\ldots,f_N$ be real-valued Borel measurable functions on a measurable space $\Omega$ and let
$\mu$ be a probability measure on $\Omega$ such that each $f_i$ is integrable with respect to $\mu$. Then there exists
a probability $\nu$ with  finite support in $\Omega$ and such that:
\[\int_\Omega f_i(\x)\,\mu(d\x)\,=\,\int_\Omega f_i(\x)\,\nu(d\x),\quad i=1\ldots,N.\]
One can even attain that the support of $\nu$ has at most $N+1$ points. 
\end{thm}
In fact if $\mathcal{M}(\Omega)_+$ denotes the space of probability measures on $\Omega$, then
the {\it moment space} 
\[Y_N:=\{\y\,=\,\left(\int_\Omega f_k(\x)d\mu(\x)\right),\:k=1,\ldots,N,\quad\mbox{for some }\mu\in \mathcal{M}(\Omega)_+\}\]
is the convex hull of the set $f(\Omega)=\{(f_1(\x),\ldots,f_N(\x)):\:\x\in\Omega\}$ and each point $\y\in Y_N$
can be represented as the convex hull of at most $N+1$ point $f(\x_i)$, $i=1,\ldots,N+1$. (See e.g. 
\S 3, p. 29 in Kemperman \cite{kemperman}.) 

In the proof of Theorem \ref{vol-suff-cond} one uses Theorem
\ref{th-anastassiou} with the $f_i$'s being all monomials $(\x^\alpha)$ of degree equal to $d$ (and so $N={n+d-1\choose d}$). We could also use Tchakaloff's Theorem \cite{bayer} but then we would potentially need 
${n+d\choose d}$ points. An alternative would be to use Tchakaloff's Theorem after ``de-homogenizing"
the measure $\mu$ so that $n$-dimensional moments of order $\vert\alpha\vert=d$ become $(n-1)$-dimensional moments
of order $\vert\alpha\vert\leq d$, and one retrieves the bound ${n-1+d\choose d}$.

\subsection{Proof of Theorem \ref{vol-suff-cond}}
\begin{proof}
(a) As $\mathcal{P}$ is a minimization problem, its feasible set 
$\{\,g\in \H[\x]_{d}:1-g\in C_{d}(\K)\,\}$ can be replaced by the smaller set
\[F\,:=\,\left\{g\in \H[\x]_{d}\::\:\begin{array}{l}\dis\int_{\R^n}\exp(-g(\x))\,d\x\,\leq\,\int_{\R^n}\exp(-g_0(\x))\,d\x\\
1-g\in\,C_{d}(\K)\end{array}\right\},\]
for some $g_0\in\P[\x]_{d}$. Notice that $F\subset\P[\x]_d$ and  $F$ is a closed convex set 
since the convex function $g\mapsto\int_{\R^n}\exp(-g)d\x$ is lower semi-continuous.

Next, let $\z=(z_\alpha)$, $\alpha\in\N^n_{d}$,
be a (fixed) element of ${\rm int}(C_{d}(\K)^*)$ (hence $z_0>0$). By Lemma \ref{empty} such an element exists
and $\langle\z,\g\rangle\geq0$ (as $g\in \P[\x]_d$ is nonnegative). Next there is some $\epsilon>0$ for which $\z\pm\epsilon\,e_\alpha\in C_d(\K)^*$ for every $\alpha$ with $\vert\alpha\vert\leq d$.
Then the constraint $1-g\in C_{d}(\K)$ implies $\langle \z\pm\epsilon\,e_\alpha,1-g\rangle\geq0$ 
(i.e. $\langle \z\pm\epsilon \,e_\alpha,e_0-\g\rangle\geq0$). 
Equivalently $z_0-\langle \z,\g\rangle\geq\epsilon \vert g_\alpha\vert$
for every $\vert\alpha\vert=d$, i.e., $\g$ is bounded and therefore the set $F$ is a compact convex set. Finally, since 
$g\mapsto \int_{\R^n}\exp(-g(\x))d\x$ is strictly convex and lower semi-continuous,
problem $\mathcal{P}$ has a unique optimal solution $g^*\in\P[\x]_d$.

(b) We may and will consider any homogeneous polynomial $g$ as an element of $\R[\x]_{d}$ whose coefficient vector
$\g=(g_\alpha)$ is such that 
$g^*_\alpha=0$ whenever $\vert\alpha\vert<d$. And so Problem $\mathcal{P}$ is equivalent to the problem
\begin{equation}
\label{minvolume1}
\mathcal{P}':\quad \left\{\begin{array}{ll}\rho=\dis\inf_{g\in\R[\x]_{d}}&\dis\int_{\R^n} \exp(-g(\x))\,d\x\\
\mbox{s.t.}&g_\alpha=0,\quad\forall\,\alpha\in\N^n_{d};\:\vert\alpha\vert<d\\
&1-g\,\in\,C_{d}(\K),\end{array}\right.
\end{equation}
where we replaced $g\in\P[\x]_d$ with the equivalent constraints
$g\in\R[\x]_{d}$ and $g_\alpha:=0$ for all $\alpha\in\N^n_{d}$ with $\vert\alpha\vert<d$.
Next, doing the change of variable $h=1-g$, $\mathcal{P}$' reads:
\begin{equation}
\label{minvolume2}
\mathcal{P}':\quad \left\{\begin{array}{ll}\rho=\dis\inf_{h\in\R[\x]_{d}}&\dis\int_{\R^n} \exp(h(\x)-1)\,d\x\\
\mbox{s.t.}&h_\alpha=0,\quad\forall\,\alpha\in\N^n_{d};\:0<\vert\alpha\vert<d\\
&h_0=1\\
&h\,\in\,C_{d}(\K),\end{array}\right.
\end{equation}

As $\K$ is compact, there exists $\theta\in\P[\x]_{d}$ such that
$1-\theta\in{\rm int}(C_{d}(\K))$, i.e., Slater's condition
holds for the convex optimization problem $\mathcal{P}'$. 
Indeed, choose $\x\mapsto\theta(\x):=M^{-1}\Vert\x\Vert^{d}$ for $M>0$ sufficiently large so
that $1-\theta>0$ on $\K$. Hence with $\Vert g\Vert_1$ 
denoting the $\ell_1$-norm of the coefficient vector of $g$ (in $\R[\x]_{d}$),
there exists $\epsilon>0$ such that
for every $h\in B(\theta,\epsilon)(:=\{h\in\R[\x]_{d}:\Vert \theta-h\Vert_1<\epsilon\}$), the polynomial
$1-h$ is (strictly) positive on $\K$. 

Therefore, if $g^*\in{\rm int}(\P[\x]_d)$ the unique optimal solution $(1-g^*)=:h^*\in\R[\x]_{d}$ of $\mathcal{P}$' in
(\ref{minvolume2}) satisfies the Karush-Kuhn-Tucker (KKT) optimality conditions (\ref{kktconditions})
which for problem (\ref{minvolume2}) read:
\begin{eqnarray}
\label{aux60}
\int_{\R^n}\x^\alpha\,\exp(h^*(\x)-1)\,d\x&=&y^*_\alpha,\quad\forall\vert\alpha\vert=d\\
\label{aux61}
 \int_{\R^n}\x^\alpha\,\exp(h^*(\x)-1)\,d\x+\gamma_\alpha&=&y^*_\alpha,\quad\forall\,\vert\alpha\vert<d\\
\label{aux62}
\langle h^*,\y^*\rangle\,=\,0;\quad h^*_0\,=\,1;\:h^*_\alpha&=&0,\quad\forall\,0<\vert\alpha\vert<d
\end{eqnarray}
for some $\y^*=(y^*_\alpha)$, $\alpha\in\N^n_{d}$, in the dual cone $C_{d}(\K)^*\subset\R^{s(d)}$ of $C_{d}(\K)$, and some vector $\gamma=(\gamma_\alpha)$, $0<\vert\alpha\vert<d$. By Lemma \ref{dual-ck},
\[C_{d}(\K)^*\,=\,\{\y\in\R^{s(d)}\::\: \exists\mu\in \mathcal{M}(\K)_+\mbox{ s.t. }y_\alpha=\int_\K\x^\alpha\,d\mu,\:\alpha\in\N^n_{d}\:\},\]
and so (\ref{kkt-suff}) is just (\ref{aux60}) restated in terms of $\mu^*$.

Next, the condition $\langle h^*,\y^*\rangle=0$ (or equivalently, $\langle 1-g^*,\y^*\rangle=0$), reads:
\[\int_{\K} (1-g^*)\,d\mu^*\,=\,0,\]
which combined with $1-g^*\in C_{d}(\K)$ and $\mu^*\in \mathcal{M}(\K)_+$, implies that $\mu^*$ is supported on 
$\K\cap\{\x:g^*(\x)=1\}=\K\cap\G^*_1$. 

Next, let $s:=\sum_{\vert\alpha\vert=d}g^*_\alpha y^*_\alpha \,(=y^*_0)$. From
$\langle 1-g^*,\mu^*\rangle=0$, the measure $s^{-1}\mu^*=:\psi$ is a probability measure
supported on $\K\cap\G^*_1$, and satisfies $\int \x^\alpha d\psi=s^{-1}y^*_\alpha$ for all
$\vert\alpha\vert=d$ (and $\langle 1-g^*,\psi\rangle=0$).

Hence by Theorem \ref{th-anastassiou}  there exists an atomic probability 
measure $\nu^*\in \mathcal{M}(\K\cap\G^*_1)_+$ such
that 
\[\int_{\K\cap\G^*_1}\x^\alpha d\nu^*(\x)\,=\,\int_{\K\cap\G^*_1}\x^\alpha d\psi(\x)\,=\,
s^{-1}\,y^*_\alpha,\qquad\forall\,\vert\alpha\vert=d.\]
In addition $\nu^*$ may be chosen to
be supported on at most $N={n+d-1\choose d}$ points in $\K\cap\G^*_1$ and not $N+1$ points as predicted by 
Theorem \ref{th-anastassiou}. This is because one among the $N$ conditions 
\[\int_{\K\cap\G_1^*}\x^\alpha\,d\nu^*\,=\,s^{-1}\,y_\alpha,\quad\vert\alpha\vert\,=\,d,\]
is redundant as $\langle g^*,\y\rangle =y_0$ and $\nu^*$ is supported on 
$\K\cap\G_1^*$. In other words, $\y$ is not in the interior of the moment space $Y_N$.
Hence in (\ref{kkt-suff}) the measure $\mu^*$ can be substituted with
the atomic measure $s\,\nu^*$ supported on at most $n+d-1\choose d$ contact points
in $\K\cap\G^*_1$.

To obtain $\mu^*(\K)=\frac{n}{d}\int_{\R^n}\exp(-g^*)$, multiply both sides of
(\ref{aux60})-(\ref{aux61}) by $h^*_\alpha$ for every $\alpha\neq0$,  sum up and use $\langle h^*,\y^*\rangle=0$ to obtain
\begin{eqnarray*}
-y^*_0=\sum_{\alpha\neq0} h^*_\alpha\,y^*_\alpha&=&\int_{\R^n}(h^*(\x)-1)\exp(h^*(\x)-1)\,d\x\\
&=&-\int_{\R^n}g^*(\x)\exp(-g^*(\x))\,d\x\\
&=&-\frac{n}{d}\int\exp(-g^*(\x))\,d\x,
\end{eqnarray*}
where we have also used (\ref{euler-1}).

(c) Let $\mu^*:=\sum_{i=1}^s \lambda_i\delta_{\x_i}$ where $\delta_{\x_i}$ is the Dirac measure at the point 
$\x_i\in\K$, $i=1,\ldots,s$. Next, let $y^*_\alpha:=\int \x^\alpha d\mu^*$ for all $\alpha\in\N^n_{d}$, so that
$\y^*\in C_d(\K)^*$. In particular $\y^*$ and $g^*$ satisfy
\[\langle 1-g^*,\y^*\rangle\,=\,\int_\K( 1-g^*)d\mu^*\,=\,0,\] 
because $g^*(\x_i)=1$ for all $i=1,\ldots,s$.
In other words, the pair $(g^*,\y^*)$ satisfies the KKT-optimality conditions
associated with the convex problem $\mathcal{P}$. But since Slater's condition holds for $\mathcal{P}$,
those conditions are also sufficient for $g^*$ to be an optimal solution of $\mathcal{P}$, the desired result
\end{proof}

\subsection{Proof of Theorem \ref{vol-suff-cond-a}}
\label{conda}
\begin{proof}
First observe that (\ref{minvolume-a}) reads
\begin{equation}
\label{minvolume-aa}
\mathcal{P}:\quad\min_{\a\in\R^n}\:\left\{\min_{g\in\P[\x]_d}\:\{{\rm vol}(\G^\a_1)\::\:1-g_\a\in C_d(\K)\}\right\},\end{equation}
and  notice that the constraint 
$1-g_\a\in C(\K)$ is the same as $1-g\in C(\K-\a)$. And so
for every $\a\in\R^n$,  the inner minimization problem 
\[\min_{g\in\P[\x]_d}\:\{{\rm vol}(\G^\a_1)\::\:1-g_\a\in C_d(\K)\}\]
of (\ref{minvolume-aa}) reads\
\begin{equation}
\label{inner2}
\rho_\a\,=\,\min_{g\in\P[\x]_d}\:\{{\rm vol}(\G_1)\::\:1-g\in C_d(\K-\a)\}.
\end{equation}
From Theorem \ref{vol-suff-cond} (with $\K-\a$ in lieu of $\K$), problem (\ref{inner2})
has a unique minimizer $g^\a\in\P[\x]_d$ with value 
$\rho_\a=\int_{\R^n}\exp(-g^\a)d\x=\int_{\R^n}\exp(-g^\a_\a)d\x$. \\

Therefore, in a minimizing sequence
 $(\a_\ell,g^{\a_\ell})\subset \R^n\times\P[\x]_d$, $\ell\in\N$, for problem $\mathcal{P}$ in (\ref{minvolume-a}) with 
 \[\rho=\lim_{\ell\to\infty}\:\int_{\R^n}\exp(-g^{\a_\ell})d\x,\]
we may and will consider that for every $\ell$, the homogeneous polynomial 
$g^{\a_\ell}\in\P[\x]_d$) solves the inner minimization problem (\ref{inner2})
with $\a_\ell$ fixed. For simplicity of notation rename  $g^{\a_\ell}$ as $g^\ell$ and 
$g^{\a_\ell}_{\a_\ell}$ ($=g^{\a_\ell}(\x-\a_\ell)$) as $g^\ell_{\a_\ell}$.

As observed in the proof of Theorem \ref{vol-suff-cond}, there is $\z\in{\rm int}(C_d(\K)^*)$ such that
 $\langle 1-g^\ell_{\a_\ell},\z\rangle \geq0$ and by Corollary I.1.6 in Faraut et Kor\'anyi \cite{faraut}, the set $\{h\in C_{d}(\K):\langle \z,h\rangle\leq z_0\}$ is compact. 
 
 Also, $\a_\ell$ can be chosen with $\Vert\a_\ell\Vert\leq M$ for all $\ell$ (and some $M$), otherwise 
the constraint $1-g_{\a_\ell}\in C_d(\K)$ would impose a much too large volume ${\rm vol}(\G^{\a_\ell}_1)$. 

Therefore, there is a subsequence $(\ell_k)$, $k\in\N$, and a point $(\a^*,\theta^*)\in\R^n\times C_d(\K)$
such that
\[\lim_{k\to\infty}\a_{\ell_k}\,=\,\a^*;\qquad \lim_{k\to\infty}(g^{\ell_k}_{\a_{\ell_k}})_\alpha\,=\,\theta^*_\alpha,\quad\forall
\alpha\in\N^n_d.\]
Recall the definition (\ref{def}) of $g^\ell_{\a_\ell}(\x)=g^\ell(\x-\a_\ell)$ for the homogeneous
polynomial $g^\ell\in\P[\x]_d$ with coefficient vector $\g^\ell$, i.e.,
\[(g^\ell_{\a_{\ell}})_\alpha\,=\,p_\alpha(\a_\ell,\g^\ell),\qquad\forall\alpha\in\N^n_d,\]
for some polynomials $(p_\alpha)\subset\R[\x,\g]$, $\alpha\in\N^n_d$. In particular, for every $\alpha\in\N^n_d$ with $\vert\alpha\vert=d$,
$p_\alpha(\a_\ell,\g^\ell)=(g^\ell)_\alpha$. And so 
for every $\alpha\in\N^n_d$ with $\vert\alpha\vert=d$,
\[\theta^*_\alpha\,=\,\lim_{k\to\infty}\,=\,(g^{\ell_k})_\alpha.\]
If we define the homogeneous polynomial $g^*$ of degree $d$ by $(g^*)_\alpha=\theta^*_\alpha$ for every
$\alpha\in\N^n_d$ with $\vert\alpha\vert=d$, then
\[\lim_{k\to\infty}(g^{\ell_k}_{\a_{\ell_k}})_\alpha\,=\,\lim_{k\to\infty}p_\alpha(\a_{\ell_k},\g^{\ell_k}),
\,=\,p_\alpha(\a^*,\g^*),\quad\forall\alpha\in\N^n_d.\]
 This means that for every $\alpha\in\N^n_d$,
\[\theta^*(\x)\,=\,g^*(\x-\a^*),\quad\x\in\R^n.\]
In addition, as $\g^{\ell_k}\to\g^*$ as $k\to\infty$, one has the pointwise convergence $g^{\ell_k}(\x)\to g^*(\x)$ for all $\x\in\R^n$.
Therefore, by Fatou's Lemma (see e.g. Ash \cite{ash}),
\[\rho\,=\,\lim_{k\to\infty}\:\int_{\R^n}\exp(-g^{\ell_k})\,d\x\,\geq\,
\int_{\R^n}\liminf_{k\to\infty}\exp(-g^{\ell_k})\,d\x\,=\,\int_{\R^n}\exp(-g^*)\,d\x,\]
which proves that $(\a^*,g^*)$ is an optimal solution of (\ref{minvolume-a}). 

In addition $g^*\in\P[\x]_d$ is an optimal solution of the inner minimization problem
in (\ref{inner2}) with $\a:=\a^*$. Otherwise an optimal solution $h\in\P[\x]_d$ of (\ref{inner2}) with $\a=\a^*$ would yield a solution $(\a^*,h)$ with associated cost $\int_{\R^n}\exp(-h)$ strictly smaller than $\rho$, a contradiction.

Hence by Theorem \ref{vol-suff-cond} (applied to problem (\ref{inner2})), if $g^*\in{\rm int}(\P[\x]_d)$
there is a finite Borel measure
$\mu^*\in \mathcal{M}(\K-\a^*)_+$ such that
\begin{eqnarray*}
\int_{\R^n}\x^\alpha\exp(-g^*)d\x&=&\int_{\K-\a^*}\x^\alpha\,d\mu^*,\qquad\forall\vert\alpha\vert=d\\
\int_{\K-\a^*}(1-g^*)\,d\mu^*&=&0;\quad \mu(\K-\a^*)=\frac{n}{d}\int_{\R^n}\exp(-g^*)\,d\x.
\end{eqnarray*}
And so $\mu^*$ is supported on the set 
\[V=\{\,\x\in\K-\a^*:\:g^*(\x)=1\}\,=\,\{\,\x\in\K:\:g^*(\x-\a^*)=1\,\}\,=\,\K\cap \G^{\a^*}_1.\]
Invoking again \cite[Theorem 2.1.1, p. 39]{anastassiou}, there exists an atomic measure $\nu^*\in \mathcal{M}(\K-\a^*)_+$ supported on 
at most ${n-1+d\choose d}$ of $\K-\a^*$ with same moments of order $d$ as $\mu^*$.
\end{proof}


\begin{thebibliography}{las}
\bibitem{anastassiou}
G.A. Anastassiou; {\em Moments in Probability and Approximation Theory}, Longman Scientific \& Technical, UK, 1993.
\bibitem{aspremont}
A. d'Aspremont. Smooth optimization with approximate gradient. {\it SIAM J. Optim.} {\bf 19} (2008), pp. 1171--1183.
\bibitem{ash}
R.B. Ash. {\em Real Analysis and Probability}, Academic Press Inc., Boston, 1972.
\bibitem{ball1}
K. Ball. Ellipsoids of maximal volume in convex bodies, {\it Geom. Dedicata} {\bf 41} (1992), pp. 241--250.
\bibitem{ball2}
K. Ball. Convex geometry and functional analysis, In {\em Handbook of the Geometry of Banach Spaces I}, W.B. Johnson and J. Lindenstrauss (Eds.), North Holland, Amsterdam 2001, pp. 161--194.
\bibitem{barvinok}
A.I. Barvinok. Computing the volume, counting integral points, and exponential sums. {\it Discrete \& Comput. Geom.} {\bf 10} (1993), pp. 123--141. 
\bibitem{bastero}
J. Bastero and M. Romance. John's decomposition of the identity in the non-convex case,
{\it Positivity} {\bf 6} (2002), pp. 1--16.
\bibitem{bayer}
C. Bayer and J. Teichmann. The proof of Tchakaloff's theorem,
{\em Proc. Amer. Math. Soc.} {\bf 134} (2006), pp. 303--3040.
\bibitem{bookstein}
F.L. Bookstein. Fitting conic sections to scattered data, {\it Comp. Graph. Image. Process.} {\bf 9} (1979), pp. 56--71.
\bibitem{calafiore}
G. Calafiore. Approximation of $n$-dimnsional data using spherical and ellipsoidal primitives,
{\it IEEE Trans. Syst. Man. Cyb.} {\bf 32} (2002), pp. 269--276.
\bibitem{chernousko}
F.L. Chernousko. Guaranteed estimates of undetermined quantities by means of ellipsoids,
{\it Sov. Math. Dodkl.} {\bf 21} (1980), pp. 396--399.
\bibitem{croux}
C. Croux, G. Haesbroeck and P.J. Rousseeuw. Location adjustment for the minimum volume ellipsoid estimator,
{\it Stat. Comput.} {\bf 12} (2002), pp. 191--200.
\bibitem{giannopoulos}
A. Giannopoulos, I. Perissinaki and A. Tsolomitis. A. John's theorem for an arbitrary pair of convex bodies. {\it Geom. Dedicata}  {\bf 84} (2001), pp. 63--79. 
\bibitem{dyer}
M.E. Dyer, A.M. Frieze and R. Kannan. A random polynomial-time algorithm for approximating the volume of convex bodies.
{\it J. ACM} {\bf 38} (1991), pp. 1--17.
\bibitem{faraut}
J. Faraut and A. Kor\'anyi. {\it Analysis on Symmetric Cones}, Clarendon Press, Oxford, 1994.
\bibitem{freitag}
E. Freitag and R. Busam. {\em Complex Analysis}, Second Edition, Springer-Verlag, Berlin, 2009.
\bibitem{gander}
W. Gander, G.H. Golub and R. Strebel. 
Least-squares fitting of circles and ellipses, {\it BIT} {\bf 34} (1994), pp. 558--578.
\bibitem{nie-helton}
J.W. Helton and J. Nie.  A semidefinite approach for truncated K-moment problems,
{\it Fond. Comput. Math.} {\bf 12} (2012), pp. 851--881.
\bibitem{henk}
M. Henk. L\"owner-John ellipsoids, {\it Documenta Math.} (2012), Extra volume: Optimization Stories, pp. 95--106.
\bibitem{gloptipoly}
D. Henrion, J.B. Lasserre and J. Lofberg. Gloptipoly 3: moments, optimization and semidefinite programming,
{\em Optim. Methods and Softwares} {\bf 24} (2009),   pp. 761--779. 
\bibitem{sirev}
D. Henrion, J.B. Lasserre and  C. Savorgnan. Approximate volume and integration of basic semi-algebraic sets,
SIAM Review {\bf 51} (2009),   pp. 722--743.  
\bibitem{ellipsoids}
D. Henrion, D. Peaucelle, D. Arzelier and M.Sebek. Ellipsoidal approximation of the stability domain of a polynomial,
{\it IEEE Trans. Aut. Control} {\bf 48} (2003), pp. 2255--2259.
 \bibitem{ieee}
D. Henrion and J.B. Lasserre. Inner approximations for polynomial matrix inequalities and robust stability regions,
{\it IEEE Trans. Aut. Control} {\bf 57} (2012),  pp. 1456--1467. 
\bibitem{lmi1}
D. Henrion, M. Sebek and V. Kucera.
Positive polynomials and and robust stabilization with fixed-order controllers,
{\it IEEE Trans. Aut. Control} {\bf 48} (2003),  pp. 1178--1186.
\bibitem{hiriart1}
J.B. Hiriart-Urruty and C. Lemarechal. {\em Convex Analysis and Minimization Algorithms I},
Springer-Verlig, Berlin, 1993.
\bibitem{hiriart2}
J.B. Hiriart-Urruty and C. Lemarechal. {\em Convex Analysis and Minimization Algorithms II},
Springer-Verlig, Berlin, 1993.
\bibitem{lmi2}
A. Karimi, H. Khatibi and R. Longchamp. Robust control of polytopic systems by convex optimization,
{\it Automatica} {\bf 43} (2007), pp. 1395--1402.
\bibitem{kemperman}
J.H.B. Kemperman. Geometry of the moment problem, in {\it Moments in Mathematics},
H.J. Landau (Ed.), {\it Proc. Symposia in Applied Mathematics} {\bf 37} (1987), pp. 16--53.
\bibitem{kemperman2}
J.H.B. Kemperman. The general moment problem, a geometric approach,
{\it Annals Math. Stat.} {\bf 39} (1968), pp. 93--122.
\bibitem{lasserresiopt}
J.B. Lasserre. Global optimization with polynomials and the problem of moments,
{\it SIAM J. Optim.} {\bf 11} (2001), pp. 796--817.
\bibitem{lasserrebook2}
J.B. Lasserre. {\em Moments, Positive Polynomials and Their Applications}, Imperial College, London, 2009.
\bibitem{lasserre-dcg}
J.B. Lasserre. Recovering an homogeneous polynomial from moments of its level set, Discrete \& Comput. Geom. {\bf 50} (2013), pp. 673--678.
\bibitem{morosov1}
A. Morosov and S. Shakirov. New and old results in resultant theory,
Theor. Math. Physics {\bf 163} (2010), pp. 587--617.
\bibitem{morosov2}
A. Morosov and S. Shakirov. Introduction to integral discriminants, J. High Energy Phys. {\bf 12} (2009), {\tt arXiv:0911.5278v1}, 2009.
\bibitem{polytopes}
U. Nurges. Robust pole assignment
via reflection coefficientsof polynomials, {\it Automatica} {\bf 42} (2006), pp. 1223--1230.
\bibitem{orourke}
J. O'Rourke and N.I. Badler. Decomposition of three-dimensional objets into spheres, {\it IEEE Trans. 
Pattern Anal. Machine Intell.} {\bf 1} (1979), pp. 295--305.
\bibitem{pratt}
V. Pratt. Direct least squares fittingof algebraic surfaces,
{\it ACM J. Comp. Graph.} {\bf 21} (1987).
\bibitem{putinar}
M. Putinar. Positive polynomials on compact semi-algebraic sets, Indiana Univ. Math. J. {\bf 42} (1993), pp. 969--984
\bibitem{rockafellar}
R.T. Rockafellar. {\em Convex Analysis}, Princeton University Press, Princeton, New Jersey, 1970.
\bibitem{rosen}
J.B. Rosen. Pattern separation by convex programming techniques, {\it J. Math. Anal. Appl.} {\bf 10} (1965), pp. 123--1324.
\bibitem{rosin}
P.L. Rosin. A note on the least squares fitting of ellipses, {\it Pattern Recog. Letters} {\bf 14} (1993), pp. 799--808.
\bibitem{rosin2}
P.L. Rosin and G.A. West. Nonparametric segmentation of curves into various representations,
{\it IEEE Trans. Pattern Anal. Machine Intell.} {\bf 17} (1995), pp. 1140--1153.
\bibitem{rousseeuw}
P.J. Rousseeuw and A.M. Leroy. {\it Robust Regression and Outlier Detection}, John Wiley, New York (1987).
\bibitem{royden}
H.L. Royden. {\it Real Analysis}, Macmillan, 1968.
\bibitem{sun}
P. Sun and R. Freund. Computation of minimum-volume covering ellipsoids, {\it Oper. Res.} {\bf 52} (2004), pp. 690--706.
\bibitem{taubin}
G. Taubin. Estimation of planar curves, surfaces and nonplanar space curves defined by implicit equations, with applications to to edge and range image segmentation, {\it IEEE Trans. Pattern Anal. Machine Intell.} {\bf 13} (1991), pp. 1115--1138.
\bibitem{boyd}
L. Vandenberghe and S. Boyd. Semidefinite programming, {\it SIAM Rev.} {\bf 38} (1996), pp. 49--95.
\bibitem{widder}
D.V. Widder. {\it The Laplace Transform}, Princeton University Press, Princeton, 1946.
\end{thebibliography}
\end{document}